\newtheorem{proposition}{Proposition}[section]
\newtheorem{theorem}[proposition]{Theorem}
\theoremstyle{remark}
\newtheorem{definition}[proposition]{Definition}
\newtheorem{remark}[proposition]{Remark}
\newcommand{\cst}{\ifmmode\mathrm{C}^*\else{$\mathrm{C}^*$}\fi}
\newcommand{\wst}{\ifmmode\mathrm{C}^*\else{$\mathrm{W}^*$}\fi}
\newcommand{\id}{\mathrm{id}}
\newcommand{\I}{\mathds{1}}
\newcommand{\GG}{\mathbb{G}}
\newcommand{\HH}{\mathbb{H}}
\DeclareMathOperator{\C}{C}
\DeclareMathOperator{\flip}{\Sigma}
\DeclareMathOperator{\Obj}{Obj}
\DeclareMathOperator{\M}{M}
\DeclareMathOperator{\Mor}{Mor}
\DeclareMathOperator{\Aut}{Aut}
\DeclareMathOperator{\B}{B}
\DeclareMathOperator{\Ltwo}{L^2\!\!\;}
\DeclareMathOperator{\RD}{\mathcal{RD}}
\DeclareMathOperator{\CP}{\mathcal{CP}}
\numberwithin{equation}{section}
\begin{document}

\author{Pawe{\l} Kasprzak}
\address{Department of Mathematical Methods in Physics, Faculty of Physics, University of Warsaw, Poland
and Institute of Mathematics of the Polish Academy of Sciences,
ul.~\'Sniadeckich 8, 00--956 Warszawa, Poland}  \email{pawel.kasprzak@fuw.edu.pl}
\thanks{Supported by National Science Centre (NCN) grant no.~2011/01/B/ST1/05011}

\title{Rieffel deformation of tensor functor and braided quantum groups}

\keywords{Rieffel deformation, braiding, quantum groups}
\subjclass[2010]{Primary: 46L89, 46L85, Secondary: 22D35, 58B32}

\begin{abstract}
We apply  Rieffel deformation to  $\C^*$- tensor product  viewed as a functor on the category of  $\C^*$-algebras with an abelian group action. 
In the case of the  Rieffel deformation  of a quantum group with the action by automorphisms the deformed tensor product enables us to view the deformed object as a braided  quantum group.   We construct a bicharacter  for a braided quantum group and the dual braided quantum group. We employ our method to  get a braided quantum Minkowski space. Its description in terms of the deformed space-time coordinates is provided.
\end{abstract}

\maketitle
\section{Introduction} The theory of locally compact quantum groups (LCQG) is well established by now. For the axiomatic formulations of LCQG with the existence of Haar measure postulated  we refer the reader to \cite{KV} or \cite{mnw}. For the theory  with the multiplicative unitary playing central role we refer to \cite{SolWor}. Roughly speaking a quantum group is a pair $\GG = (A,\Delta)$ where $A$ is a $\C^*$-algebra and $\Delta\in\Mor(A,A\otimes A)$. Let us postpone  the discussion of  morphism and multipliers  to Section \ref{cpf} where the respective $\mathcal{C}^*$-category is  introduced and focus on the tensor product  $A\otimes B$ of $\C^*$ - algebras $A$ and $B$. 

Remarkably $A\otimes B$ is not  a uniquely defined $\C^*$-algebra. Between the minimal (spatial) and the  maximal (universal) tensor products $A\otimes_{\textrm{min}} B$ and  $A\otimes_{\textrm{max}} B$ we have  a whole family of   alternative tensor products $A\otimes_\lambda B$  unless  $A\otimes_{\textrm{min}} B = A\otimes_{\textrm{max}} B$. In this paper we restrict our interest to  the spatial  tensor product thus we ignore the subscript ``${\textrm{min}}$'' in the notation writing  $A\otimes B$.  For an enjoyable book with the tensor product of $\C^*$-algebras intelligibly explained we refer to \cite{BrOz}.

 Let    $A$ and $B$   be faithfully  represented on Hilbert spaces  $H$ and $K$ respectively. Then  $A\otimes B$ is defined as the  closed linear span of $\{a\otimes b\in B(H\otimes K):a\in A, b\in B\}$. Remarkably  the resulting $\C^*$-algebra $A\otimes B$ does not depend on the choice of faithful representations. The tensor product construction  lifts to the morphisms level: for $\pi\in\Mor(A_1,B_1)$ and $\sigma\in\Mor(A_2,B_2)$ there exists a unique   morphism  $\pi\otimes\sigma\in\Mor(A_1\otimes B_1,A_2\otimes B_2)$ satisfying $(\pi\otimes\sigma)(a\otimes b) = \pi(a)\otimes\sigma(b)$ for any $a\in A$, $b\in B$.  Summarizing we get the  associative functor   $\otimes:\mathcal{C}^*\times \mathcal{C}^*\rightarrow \mathcal{C}^*$
\[\otimes\circ(\otimes\times\id) = \otimes\circ(\id\times\otimes)\]
together with a pair of distinguished morphisms $\iota^A\in\Mor(A,A\otimes B)$ and  $\iota^B\in\Mor(A,A\otimes B)$
\[\begin{split}
\iota^A: & A \ni a \mapsto a \otimes \I\in\M(A \otimes B)\\
\iota^B: & B\ni b\mapsto  \I\otimes b \in\M(A \otimes B)
\end{split}\] satisfying  $A \otimes B = [\iota^A(A)\cdot\iota^B(B)]$ (for $[X]$ - notation we refer to the last paragraph of this section). 

The tensor product functor has the covariant version  defined in the category  $\mathcal{C}^*_G$ of $\C^*$-algebras acted by a locally compact group $G$ which we discuss in Section \ref{cpf}. An object  of $\mathcal{C}^*_G$ is a pair $(A,\rho^A)$ where $\rho^A:G\rightarrow \Aut(A)$ is an a action of $G$ on $A$ and  the action $\rho^{A\otimes B}_g$ of $G$ on $A\otimes B$   is the diagonal action $\rho^{A\otimes B}_g = \rho^A_g\otimes \rho^B_g$.  

Let $\Gamma$ be an abelian group and $(A,\rho^A)\in\Obj(\mathcal{C}^*_\Gamma)$ and let $\Psi$ be a $2$-cocycle on the Pontryagin dual $\widehat\Gamma$. As we noted in \cite{Kasp} one may define Rieffel deformed $\C^*$-algebra $A^\Psi$ and view it as an object of $\mathcal{C}^*_\Gamma$. In Section \ref{rdf} we view  Rieffel deformation $\RD^\Psi$ as  functor $\RD^\Psi:\mathcal{C}^*_\Gamma\rightarrow\mathcal{C}^*_\Gamma$. Observing the invertibility of $\RD^\Psi$   \[\left(\RD^\Psi\right)^{-1} = \RD^{\overline\Psi}\] we  define  the deformed tensor functor $\otimes_\Psi:\mathcal{C}^*_\Gamma\times \mathcal{C}^*_\Gamma\rightarrow \mathcal{C}^*_\Gamma$  by the formula \[\otimes_\Psi = \RD^\Psi\circ\otimes\circ\left(\RD^{\overline\Psi}\times \RD^{\overline\Psi}\right)\]   and note that $(\mathcal{C}^*_\Gamma,\otimes_\Psi)$ forms a  monoidal category in the sense of  \cite[Definition 9.1.2]{Majid}. In  \cite[Definition 9.2.1]{Majid} S. Majid defines a braided monoidal category, which   is a monoidal category $(\mathcal{C},\otimes)$ with  $\otimes$ and $\otimes^{\textrm{op}}$ being naturally equivalent. We show that applying $\RD^\Psi$ to the flip isomorphisms $\Sigma_{A,B}:A\otimes B\rightarrow B\otimes A$ 
\[\Sigma_{A,B}(a\otimes b) = b\otimes a\] we get a family of isomorphisms   $\sideset{^\Psi}{_{A\otimes B}}\flip:A\otimes_\Psi B\rightarrow B\otimes_\Psi A$   satisfying the  hexagon equation  \cite[Eq. (9.4)]{Majid}. In particular  $(\mathcal{C}^*_\Gamma,\otimes_\Psi,\sideset{^\Psi}{}\flip)$  forms a braided monoidal category. Discussing \cite[Section 9]{Majid} let us note that our passage from $A\otimes B$ to $A\otimes_\Psi B$ is closely related with   S. Majid construction of   noncommutative tensor product for
$H$-comodule algebras over a quasitriangular Hopf algebra $H$ (see \cite[Corollary 9.2.13]{Majid}).

Having  the functor $\otimes_\Psi$ defined we move on in Section \ref{bqg} to the  $\otimes_\Psi$ - braided quantum groups. We observe  that a   quantum group $\GG =(A,\Delta) $ on which $\Gamma$ acts by  $\GG$ - automorphisms gives rise by Rieffel deformation to a braided quantum group $\GG^\Psi = (A^\Psi,\Delta^\Psi)$ with $\Delta^\Psi\in\Mor(A^\Psi,A^\Psi\otimes_\Psi A^\Psi)$ satisfying the coassociativity and cancellation law in the braided sense. Remarkably we get a  candidates for    bicharacter $W^\Psi$ of $\GG^\Psi$ and the dual braided quantum group. The attempt of formulating an analytical  theory of braided quantum groups was undertaken in \cite{Roy} but to the authors knowledge the task is not yet being completed.  For the algebraic counterpart of the notion of a braided quantum group see \cite[Section 9]{Majid}.  
 
In the last section we employ  our method to construct a braided quantum Minkowski space. Its description in terms of the deformed space-time coordinates  $(\hat x_0,\hat x_1,\hat x_2,\hat x_3)$ is provided.  The comultiplication  $\Delta^\Psi$ when applied to the coordinates is shown to  give $\Delta^\Psi(\hat x_i) = \hat x_i\otimes_\Psi \I + \I\otimes_\Psi \hat x_i$, for $i= 0,1,2,3$.
Our construction is consistent with    S. Roy thesis  \cite{Roy} where he observes  that trying to formulate the theory of semidirect product of quantum groups one must allow the counterpart of a normal subgroup to be   a braided quantum group. Thus quantizing the Galilean or Poincar\'e group leads  naturally to braided structures.  In our ongoing project we deform the Poincar\'e group where the role of the normal subgroup is played by the quantum Minkowski space described in this paper. 
A different motivation for considering the  braided structures  was given in  \cite[Section 10]{Majid} where the notion of braided statistics is introduced. As noted by  S. Majid  the fermionic statistics and the supergeometry assigned to it  may be expressed in terms of  braided monoidal category formulated in the context of the  $\mathbb{Z}_2$ - modules and   \cite[Corollary 9.2.13]{Majid}. Thus,  as suggested by S. Majid,  more complicated examples of braided monoidal categories have a potential to be used in physics for a description of more general kind of statistics and the corresponding geometries. 

Some remarks about the notation. For a subset $X$ of a Banach
space $B$, $X^{\textrm{cls}}$ denotes the closed linear span of $X$. Alternatively we shall write $[X] = X^{\textrm {cls}}$.
The (Banach) dual  $A'$ of a $\C^*$-algebra $A$  is
an $A$-bimodule where for $\omega\in A'$ and $b,c\in A$ we
define $b\cdot\omega\cdot c$ by the formula:
\[(b\cdot\omega\cdot c)(a)=\omega(cab)\]
for any $a\in A$. The $\C^*$-algebra  of  multipliers of $A$ is denoted by $\M(A)$. $\M(A)$ is equipped with the strict topology - for the discussion of natural $\C^*$-topologies we refer to \cite{woraffun}. The group $\C^*$ - algebra of a locally compact group $G$ will be denoted by $\C^*(G)$; the representatives  of $G$ inside $\M(\C^*(G))$ will be denoted by   $\lambda_g\in\M(\C^*(G))$. 
\section{$\mathcal{C}^*_\Gamma$-category and the crossed product functor}\label{cpf}
Since the results of this paper are naturally expressed  in the   category theory terms let us first introduce the category $\mathcal{C}^*$ with $\C^*$-algebras being its objects and Woronowicz morphisms. For $A,B\in\Obj(\mathcal{C}^*)$, a   morphism  $\pi:A\rightarrow B$  is a $*$-homomorphism $\pi:A\rightarrow \M(B)$ which is non-degenerate $\overline{\pi(A)B}^{\|\cdot\|} = B$.  One may introduce the unique extension  $\bar{\pi}:\M(A)\rightarrow\M(B)$ of $\pi$
satisfying $\bar\pi(a_1)\pi(a_2)b = \pi(a_1a_2)b$ for all $a_1,a_2\in A$ and $b\in B$. Denoting the extension by the same symbol $\pi$  we have a well defined morphisms composition: $\sigma\circ\pi\in\Mor(A,C)$ for any $\pi\in\Mor(A,B)$ and $\sigma\in\Mor(B,C)$.

Let $\Gamma$ be an abelian locally compact group. We use the additive notation $\gamma +\gamma'\in \Gamma$ for any $\gamma,\gamma'\in\Gamma$. The category of $\Gamma$-$\C^*$-algebras is denoted by $\mathcal{C}^*_\Gamma$. We write $(A,\rho^A) \in \Obj(\mathcal{C}^*_\Gamma)$ where $A$ is a $\C^*$ algebra  and  $\rho^A:\Gamma\rightarrow\Aut(A)$ is a continuous  action, i.e. $\rho^A_{\gamma+\gamma'} = \rho^A_{\gamma}\circ\rho^A_{\gamma'}$ and the map \[
\Gamma\ni\gamma\mapsto\rho_\gamma(a)\in A\] is norm continuous for any $a\in A$. A morphism $\pi:(A,\rho^A)\rightarrow (B,\rho^B) $ is a morphism $\pi\in\Mor(A,B)$   which is covariant  \[\rho^B_\gamma\circ\pi = \pi\circ\rho_\gamma^A.\] We  write $\Mor_\Gamma$ and   $\pi\in\Mor_\Gamma(A,B)$ avoiding  to put $\rho^A,\rho^B$ under the $\Mor_\Gamma$-symbol.  

An element $(A,\rho^A)\in\Obj(\mathcal{C}^*_\Gamma)$ gives rise to the crossed product $\C^*$-algebra $\Gamma\ltimes A\in \Obj(\mathcal{C}^*)$. For the crossed product theory we refer to \cite{Wil} recalling only the essential properties of $\Gamma\ltimes A$. Remarkably  $A$ and  $\C^*(\Gamma)$   embed  into $\M(\Gamma\ltimes A)$ via \[\begin{split} \iota^A&\in\Mor(A,\Gamma\ltimes A)\\\iota^{\C^*(\Gamma)}&\in\Mor(\C^*(\Gamma),\Gamma\ltimes A) \end{split}\] and identifying $A,\C^*(\Gamma)$ with the subalgebras of $\M(\Gamma\ltimes A)$   we have  \[\Gamma\ltimes A = [\C^*(\Gamma)\cdot A]\] 
\begin{definition}
Let $(A,\rho^A)\in\Obj(\mathcal{C}^*_\Gamma)$ and let $H$ be a Hilbert space. Let $\pi:A\rightarrow B(H)$ be a non-degenerate representation of $A$ on $H$ and $\theta:\Gamma\rightarrow B(H)$   a unitary strongly continuous representation of $\Gamma$ on $H$. We say that $(\theta,\pi)$ is a covariant representation of $(A,\rho^A)$ on $H$ if for any $a\in A$ and $\gamma\in\Gamma$ we have 
\[\pi(\rho^A_\gamma(a)) = \theta_\gamma\pi(a)\theta_\gamma^*.\]
\end{definition} Let $(\theta,\pi)$ be a covariant representation of $(A,\rho^A)$ on a Hilbert space $H$. The unique extension of   $\theta:\Gamma\rightarrow B(H)$ to the $\C^*$ - representation satisfying \[\M(\C^*(\Gamma))\ni\lambda_\gamma\mapsto\theta_\gamma\in\B(H)\]  will be denoted by the same symbol $\theta:\C^*(\Gamma)\rightarrow\B(H)$.  The universal property of $\Gamma\ltimes A$ is formulated as a 1-1 correspondence between covariant representations of $(A,\rho^A)$ and   non-degenerate representation  of $\Gamma\ltimes A$ where the representation assigned to $(\theta,\pi)$  denoted by $ \theta\ltimes\pi$ is characterized as the unique representation satisfying
\[
\begin{split}
(\theta\ltimes\pi)\circ\iota^A &= \pi\circ\iota^A \\
(\theta\ltimes\pi)\circ\iota^{\C^*(\Gamma)}  &= \theta
\end{split}
\]
The universal property of $\Gamma\ltimes A$ for covariant representations of $(A,\rho^A)\in\Obj(\mathcal{C}^*_\Gamma)$ on a Hilbert space $H$ has its counterpart for covariant representations on $\C^*$ - algebras. 
 \begin{definition}
Let $(A,\rho^A)\in\Obj(\mathcal{C}^*_\Gamma)$, $C\in\Obj(\mathcal{C}^*)$, $\pi\in\Mor(A,C)$ and $\theta:\Gamma\rightarrow\M(C)$ a    unitary strictly continuous   representation. We say that $(\theta,\pi)$ is a covariant representation  of $(A,\rho^A)$ on $C$ if for any $a\in A$ and $\gamma\in\Gamma$ we have 
\[\pi(\rho^A_\gamma(a)) = \theta_\gamma\pi(a)\theta_\gamma^*.\]
\end{definition}
For a covariant representation $(\theta,\pi)$   there exists  $\theta\ltimes\pi\in\Mor(\Gamma\ltimes A, C)$  uniquely characterized  by \[
\begin{split}
(\theta\ltimes\pi)\circ\iota^A &= \pi\circ\iota^A \\
(\theta\ltimes\pi)\circ\iota^{\C^*(\Gamma)}  &= \theta
\end{split}
\] Let $\widehat\Gamma$ be the  Pontryagin  dual of $\Gamma$ with the duality 
\[\widehat\Gamma\times\Gamma\ni(\widehat\gamma,\gamma)\mapsto \langle\widehat\gamma,\gamma\rangle\in\mathbb{T}^1\] and let $\widehat\gamma\in\widehat{\Gamma}$. Noting that $\Gamma\ni\gamma\mapsto \langle\widehat\gamma,\gamma\rangle\lambda_\gamma\in\M(\Gamma\ltimes A)$ is a representation such that the pair  $(\langle\widehat\gamma,\cdot\rangle\lambda,\iota^A)$ is a covariant representation and using  universality of $\Gamma\ltimes A$ we get a morphism $\widehat\rho_{\widehat\gamma}\in\Mor(\Gamma\ltimes A,\Gamma\ltimes A)$. Since $\rho_{\widehat\gamma+\widehat\gamma'} = \widehat\rho_{\widehat\gamma}\circ\widehat \rho_{\widehat\gamma'}$ we may see that $\widehat\rho_{\widehat\gamma}\in\Aut(\Gamma\ltimes A)$ and  we get  the famous dual  action $\widehat\rho$ of $\widehat\Gamma$ on $\Gamma\ltimes A$.  

Let $\pi\in\Mor_\Gamma(A,B)$. Again, applying the universal property of $\Gamma\ltimes A$ in the context of the covariant representation $(\lambda,\pi)$ we get $\lambda\ltimes\pi\in\Mor(\Gamma\ltimes A,\Gamma\ltimes B)$.  Its  covariance with respect to the dual actions is clear and enables us to view 
the crossed product  as a functor $\CP: \mathcal{C}^*_\Gamma\rightarrow \mathcal{C}^*_{\widehat{\Gamma}}$  \[
\begin{split}
\CP((A,\rho^A)) &= (\Gamma\ltimes A,\hat\rho)\\
\CP(\pi) & = \lambda\ltimes\pi
\end{split}\] 
 
Remarkably we have a characterization of    $(D,\hat\rho^D)\in\Obj(\mathcal{C}^*_{\widehat\Gamma})$ that are of the crossed product form. Note that for $(\Gamma\ltimes A,\hat\rho)\in\Obj(\mathcal{C}^*_{\widehat\Gamma})$ we have a representation $\lambda:\Gamma\rightarrow\M(\Gamma\ltimes A)$ and 
\[\hat\rho_{\hat\gamma}(\lambda_\gamma) = \langle\hat\gamma,\gamma\rangle\lambda_\gamma.\] As was shown by Landstad \cite{lan}, a $\widehat\Gamma$-$\C^*$-algebra  $(D,\hat\rho^D)\in\Obj(\mathcal{C}^*_{\widehat\Gamma})	$ that is equipped with a representation 	$\lambda:\Gamma\rightarrow\M(D)$		satisfying \[\hat\rho^D_{\hat\gamma}(\lambda_\gamma) = \langle\hat\gamma,\gamma\rangle\lambda_\gamma \] may be identified with a crossed product of a certain (essentially unique)  $(A,\rho^A)$. Extending $\lambda$ to an injective morphism $\lambda\in\Mor(\C^*(\Gamma),D)$, we characterize $A\subset \M(D)$ as the $\Gamma$-$\C^*$-algebra satisfying Landstad conditions

\begin{equation}\label{lancod} A=\left\{d\in M(D)\left|\begin{array}{l}1.\,\,\hat\rho^D_{\widehat\gamma}(d)=d\\
2.\,\,\mbox{The map }\Gamma\ni\gamma\mapsto\lambda_\gamma d\lambda_\gamma^*\in \M(D)\\
\mbox{\,\,\,\,\,\, is norm-continuous}\\
3.\,\, xdy\in D \mbox{ for any }x,y\in \C^*(\Gamma)
\end{array}\right.\right\}\end{equation}

where $\rho^A$ is implemented by $\lambda$
\[\rho^A_\gamma(a) = \lambda_\gamma a \lambda_\gamma^*.\] 
A triple $(D,\hat\rho^D,\lambda)$ will be called a $\Gamma$-product and $A\subset \M(D)$ a Landstad algebra of the corresponding  $\Gamma$-product.
\begin{remark}\label{repcp}
Let $H$ be a Hilbert space and $A\subset\B(H)$. Then there exists a faithful representation  $\Gamma\ltimes A$  on $\Ltwo(\Gamma)\otimes H$ that corresponds to  a unique covariant representation  $(\theta,\pi)$. Identifying $\Ltwo(\Gamma)\otimes H$ with the Hilbert space of square integrable  maps from $\Gamma$ to $H$  \[\Ltwo(\Gamma)\otimes H \cong \Ltwo(\Gamma,H)\] we have \[\begin{split}(\theta_\gamma x)(\gamma') &= x(\gamma'+\gamma)\\(\pi(a)x)(\gamma)  &= \rho^A_\gamma(a)x(\gamma)\end{split}\]  for any $x\in \Ltwo(\Gamma,H)$ and $\gamma,\gamma'\in\Gamma$. 
\end{remark}
\section{Rieffel deformation functor}\label{rdf}
Rieffel deformation as formulated in this section was introduced in \cite{Kasp} - here we rephrase it in   categorical terms. For the original approach developed by M. Rieffel we refer to \cite{Rf1}. 

Let $\Psi:\widehat\Gamma\times\widehat\Gamma\rightarrow\mathbb{T}^1$ be a continuous  $2$-cocycle on $\widehat\Gamma$
\begin{itemize}
\item[(i)] $\Psi(e,\widehat\gamma) =  \Psi(\widehat\gamma, e) = 1 \textrm{ for all } \widehat\gamma\in\widehat\Gamma$
\item[(ii)] $\Psi(\widehat\gamma_1,\widehat\gamma_2+\widehat\gamma_3)\Psi(\widehat\gamma_2,\widehat\gamma_3)= \Psi(\widehat\gamma_1+\widehat\gamma_2,\widehat\gamma_3)\Psi(\widehat\gamma_1,\widehat\gamma_2) \textrm{ for all } \widehat\gamma_1,\widehat\gamma_2,\widehat\gamma_3\in\widehat\Gamma $
\end{itemize}
Note that the set $\mathcal{COC}_2(\widehat\Gamma)$ of $2$-cocyles on $\widehat\Gamma$ forms  a multiplicative group and $\Psi^{-1} = \overline\Psi$. 
For any $\widehat\gamma\in\widehat\Gamma$ we define an auxiliary function $\Psi_{\widehat\gamma}:\widehat\Gamma\rightarrow\mathbb{T}$ where 
\[\Psi_{\widehat\gamma}(\widehat\gamma') = \Psi(\widehat\gamma', \widehat\gamma) \] - its role in the theory will be explained later. 

In this section we shall write $(A,\rho)$ ignoring $A$ in  $\rho^A$. For $(A,\rho)\in\Obj(\mathcal{C}^*_\Gamma)$ we form the crossed product $\Gamma\ltimes A$. Identifying $\C^*(\Gamma)$ with $\C_0(\widehat\Gamma)$ we get a unitary family $U_{\widehat\gamma} = \iota^{\C^*(\Gamma)}(\Psi_{\widehat\gamma})\in\M(\Gamma\ltimes A)$. 
For any $\widehat\gamma\in\widehat\Gamma$ we define $\widehat\rho_{\widehat\gamma}
^\Psi\in\Aut(\Gamma\ltimes A)$ by the formula
\begin{equation}\label{defdulact}\widehat\rho_{\widehat\gamma}
^\Psi(b) = U_{\widehat\gamma}^*\widehat\rho_{\widehat\gamma}(b)U_{\widehat\gamma}\end{equation}
As was shown in \cite[ Theorem 3.1]{Kasp},   $\widehat\rho_{\widehat\gamma}
^\Psi$ defines a continuous action of $\widehat\Gamma$ on  $\Gamma\ltimes A$ and the triple  
$(\Gamma\ltimes A, \widehat\rho_{\widehat\gamma}^\Psi,\lambda)$ is a $\Gamma$ - product. The Landstad algebra  of this $\Gamma$ - triple is called a Rieffel deformation of $A$ and denoted $A^\Psi$. Remarkably  $A^\Psi$ being  the Landstad algebra of $(\Gamma\ltimes A, \widehat\rho_{\widehat\gamma}^\Psi,\lambda)$ is equipped with $\Gamma$-action that is implemented by $\lambda$:
\[A\ni a\mapsto \lambda_\gamma a \lambda_\gamma^*\in A\] for any $a\in A^\Psi$ which we denote by $\rho:\Gamma\rightarrow\Aut(A^\Psi)$. Note that  $A^\Psi$ being the Landstad algebra of the triple $(\Gamma\ltimes A, \widehat\rho_{\widehat\gamma}^\Psi,\lambda)$ satisfies  $\Gamma\ltimes A^\Psi = \Gamma\ltimes A$.  

Let $\pi\in\Mor_\Gamma(A,B)$ and $\lambda\ltimes\pi\in\Mor_{\widehat\Gamma}(\Gamma\ltimes A,\Gamma\ltimes B)$. It was shown in \cite{Kasp} that $(\lambda\ltimes \pi)(A^\Psi)\subset\M(B^\Psi)$ and $\pi$ gives rise to   a  morphism $\pi^\Psi\in\Mor_\Gamma(A^\Psi,B^\Psi)$ where $\pi^\Psi = \lambda\ltimes\pi|_{A^\Psi}$. Since for $\pi\in\Mor_\Gamma(A,B)$ and $\sigma\in\Mor_\Gamma(B,C)$ we have 
\[\begin{split}\lambda\ltimes(\sigma\circ\pi)&=  (\lambda\ltimes\sigma)\circ(\lambda\ltimes\pi)\end{split}\] 
we get  $(\sigma\circ\pi)^\Psi = \sigma^\Psi\circ\pi^\Psi$. 
\begin{definition}
Let $\Gamma$ be an abelian group. For any $\Psi\in \mathcal{COC}_2(\widehat\Gamma)$ we define a functor 
\[\RD^\Psi:\mathcal{C}^*_\Gamma\rightarrow\mathcal{C}^*_\Gamma\] such that adopting the above notation we have 
\[\begin{split} \RD^\Psi(A,\rho) &= (A^\Psi,\rho)\\
\RD^{\Psi}(\pi) &= \pi^\Psi
\end{split}\] which we call a Rieffel deformation functor associated to $\Psi$.
\end{definition}
For any category $\mathcal{C}$, a functor $\mathcal{R}:\mathcal{C}\rightarrow \mathcal{C}$ is  called an endofunctor. An endofunctor $\mathcal{R}$ having an inverse is  an autofunctor. The class of autofunctors of a category $\mathcal{C}$ is denoted by $\mathcal{AFUN}(\mathcal{C})$. 
\begin{remark}   Note that  the set of autofunctors $\{\RD^\Psi:\Psi\in \mathcal{COC}_2(\widehat\Gamma)\}$ is closed under the composition and $ \RD^{\Psi\Phi} = \RD^{\Psi}\circ\RD^{\Phi}$. Since $\Psi^{-1} = \overline{\Psi}$ we have 
\begin{equation}\label{invrd}(\RD^{\Psi})^{-1} = \RD^{\overline{\Psi}}
\end{equation}
Let us also mention that $\RD^{\Psi}$ is exact (see Proposition 2.9 of \cite{Kasp}) . In particular $\RD^\Psi(\pi)$  is injective if (and only if by invertibility of $\RD^\Psi)$  $\pi$ is injective.
\end{remark}
\subsection{Rieffel deformation of tensor product}
Let $(A,\rho^A),(B,\rho^B)\in\Obj(\mathcal{C}^*_\Gamma)$. Defining the action $\rho^{A\otimes B}$ of $\Gamma$ on $A\otimes B$  \[\rho^{A\otimes B}_\gamma(a\otimes b) = \rho^A_\gamma(a)\otimes \rho^B_\gamma(b)\] we get the tensor functor $\otimes:\mathcal{C}^*_\Gamma\times\mathcal{C}^*_\Gamma\rightarrow \mathcal{C}^*_\Gamma$. Clearly the functor $\otimes$ is associative: \[\otimes\circ(\otimes\times\id) = \otimes\circ(\id\times\otimes)\] Moreover for $(A,\rho^A),(B,\rho^B)\in\Obj(\mathcal{C}^*_\Gamma)$ we have a pair of injective morphisms \[\begin{split}\iota^A\in\Mor_\Gamma(A,A\otimes B):&\,\,\iota^A(a) = a\otimes 1\\ \iota^B\in\Mor_\Gamma(B,A\otimes B):&\,\,\iota^B(b) = 1\otimes b
\end{split}
\] satisfying $A\otimes B = [\iota^A(A)\cdot\iota^B(B)]$.
\begin{definition}
Let $\Psi\in\mathcal{COC}_2(\widehat\Gamma)$. We define the functor $\otimes_\Psi:\mathcal{C}^*_\Gamma\times\mathcal{C}^*_\Gamma\rightarrow \mathcal{C}^*_\Gamma$ \[\otimes_\Psi = \RD^\Psi\circ\otimes\circ (\RD^{\bar\Psi}\times\RD^{\bar\Psi})\]
\end{definition}
Note that 
\begin{equation}\label{inter}A^\Psi\otimes_\Psi B^\Psi = (A\otimes B)^\Psi\end{equation}
 Since $\iota^A\in\Mor_\Gamma(A,A\otimes B)$ we immediately see that $\RD^\Psi(\iota^A)\in\Mor_\Gamma(A^\Psi,A^\Psi\otimes_\Psi B^\Psi)$ is an injective morphism. Similarly, for $\iota^B$ we get injecitve $\RD^\Psi(\iota^B)\in\Mor_\Gamma(B^\Psi,A^\Psi\otimes_\Psi B^\Psi)$. We shall use a notation
\begin{equation}\label{sugnot}\begin{split}
\RD^\Psi(\iota^A)(a) &=a\otimes_\Psi I\\
\RD^\Psi(\iota^B)(b) &=\I\otimes_\Psi b
\end{split}
\end{equation}
for $a\in A^\Psi$ and $b\in B^\Psi$.

\begin{proposition}
Adopting the above notation,  the functor $\otimes_\Psi$ is associative $\otimes_\Psi\circ (\otimes_\Psi\times\id) = \otimes_\Psi\circ (\id\times \otimes_\Psi)$.
Moreover $A^\Psi\otimes_\Psi B^\Psi = [(A^\Psi\otimes_\Psi\I)\cdot (\I\otimes_\Psi B^\Psi)]$.
\end{proposition}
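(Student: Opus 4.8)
The plan is to prove the two assertions separately, each by reducing the deformed statement to the corresponding undeformed statement via the functorial identities already established. For the associativity claim, the key observation is that $\otimes_\Psi$ is defined as a conjugate of $\otimes$ by the autofunctor $\RD^\Psi$, so its associativity should follow formally from the associativity of $\otimes$ together with the multiplicativity of Rieffel deformation $\RD^{\Psi\Phi} = \RD^\Psi\circ\RD^\Phi$ and the inversion formula $(\RD^\Psi)^{-1} = \RD^{\overline\Psi}$. Concretely, I would expand both sides $\otimes_\Psi\circ(\otimes_\Psi\times\id)$ and $\otimes_\Psi\circ(\id\times\otimes_\Psi)$ using the definition $\otimes_\Psi = \RD^\Psi\circ\otimes\circ(\RD^{\overline\Psi}\times\RD^{\overline\Psi})$, and then use the fact that on the middle slot the composition $\RD^{\overline\Psi}\circ\RD^\Psi = \id$ cancels, reducing both sides to $\RD^\Psi\circ\otimes\circ(\otimes\times\id)\circ(\RD^{\overline\Psi}\times\RD^{\overline\Psi}\times\RD^{\overline\Psi})$, at which point the associativity of the undeformed $\otimes$ finishes the argument.

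\emph{The main obstacle} here is purely bookkeeping: one must track carefully how the three copies of $\RD^{\overline\Psi}$ (one per tensor factor in the triple product) interact with the nested deformation, and in particular verify that the inner $\RD^{\overline\Psi}$ applied to a two-fold deformed tensor product $(A\otimes B)^\Psi$ indeed undoes the outer $\RD^\Psi$. This is exactly where the intertwining identity~\eqref{inter}, $A^\Psi\otimes_\Psi B^\Psi = (A\otimes B)^\Psi$, together with the autofunctor relations~\eqref{invrd}, does the work. I would write the cancellations out at the level of the autofunctor composition rather than element by element, so that the argument is an identity of functors on $\mathcal{C}^*_\Gamma\times\mathcal{C}^*_\Gamma\times\mathcal{C}^*_\Gamma$.

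For the generation statement $A^\Psi\otimes_\Psi B^\Psi = [(A^\Psi\otimes_\Psi\I)\cdot(\I\otimes_\Psi B^\Psi)]$, the strategy is to transport the undeformed identity $A\otimes B = [\iota^A(A)\cdot\iota^B(B)]$ through the functor $\RD^\Psi$. By definition~\eqref{sugnot}, $A^\Psi\otimes_\Psi\I = \RD^\Psi(\iota^A)(A^\Psi)$ and $\I\otimes_\Psi B^\Psi = \RD^\Psi(\iota^B)(B^\Psi)$, so the claim amounts to showing that $\RD^\Psi$ sends the closed span $[\iota^A(A)\cdot\iota^B(B)] = A\otimes B$ to the closed span of the images of the deformed inclusion morphisms. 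First I would replace $(A,B)$ by $(A^{\overline\Psi},B^{\overline\Psi})$ so that via~\eqref{inter} the target $\RD^\Psi(A\otimes B)$ becomes exactly $A^\Psi\otimes_\Psi B^\Psi$ in the natural variables, keeping the notation aligned with the left-hand side.

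The substantive point is that a morphism $\pi\in\Mor_\Gamma(A,B)$ that is nondegenerate, meaning $[\pi(A)B] = B$, yields after deformation a morphism $\RD^\Psi(\pi) = \pi^\Psi = \lambda\ltimes\pi|_{A^\Psi}$ that remains nondegenerate; more generally I expect that $\RD^\Psi$ preserves the operation of taking closed linear spans of products of subalgebras sitting inside a common deformed multiplier algebra. Since all three objects $A^\Psi\otimes_\Psi\I$, $\I\otimes_\Psi B^\Psi$, and $A^\Psi\otimes_\Psi B^\Psi$ live inside $\M((A\otimes B)^\Psi)$ and are Landstad-type subalgebras of the common $\Gamma$-product $\Gamma\ltimes(A\otimes B)$, I would verify the span identity by testing it against the crossed product, using that $\Gamma\ltimes A^\Psi = \Gamma\ltimes A$ so that the density relation $A\otimes B = [\iota^A(A)\cdot\iota^B(B)]$ can be lifted to the crossed product level and then intersected back down to the Landstad algebra. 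The delicate step, and the one I would scrutinize, is confirming that passing to closed spans commutes with restricting to the Landstad algebra; this is where exactness and injectivity of $\RD^\Psi$, noted in the preceding remark, are the relevant tools.
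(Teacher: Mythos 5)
Your proposal follows essentially the same route as the paper: associativity is obtained by cancelling $\RD^{\overline\Psi}\circ\RD^{\Psi}$ via the identity $A^\Psi\otimes_\Psi B^\Psi=(A\otimes B)^\Psi$ and the invertibility of $\RD^\Psi$, which is exactly the paper's object-level computation. For the generation statement, the ``substantive point'' you isolate---that Rieffel deformation preserves closed spans of products of $\Gamma$-invariant nondegenerate subalgebras of $\M(A\otimes B)$---is precisely Lemma 3.4 of \cite{kasp1}, which the paper discharges by citation rather than reproving; your sketched lift-to-the-crossed-product-and-descend argument is the right idea behind that lemma, though the delicate descent from $\Gamma\ltimes(A\otimes B)$ back to the Landstad algebra, which you correctly flag, is left unverified in your write-up just as it is delegated to the reference in the paper.
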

\begin{proof}
 Let $A,B,C \in\mathcal{C}^*_\Gamma$. We have 
\[\begin{split}(A^\Psi\otimes_\Psi B^\Psi)\otimes_\Psi C^\Psi&=(A\otimes B)^\Psi\otimes_\Psi C^\Psi\\
&=((A\otimes B) \otimes C)^\Psi\\&=(A\otimes(B\otimes C))^\Psi=A^\Psi\otimes_\Psi(B^\Psi\otimes_\Psi C^\Psi)
\end{split}\]
Since $\RD^\Psi$ is an invertible functor and all $\C^*$-algebras are of the form $A^\Psi$ we get associativity of $\otimes_\Psi$. 
In order to prove the second part of the proposition 
we use Lemma 3.4 of \cite{kasp1} in the context of $\iota^A(A),\iota^B(B)\subset\M(A\otimes B)$ getting   $A^\Psi\otimes_\Psi B^\Psi = [(A^\Psi\otimes_\Psi\I)\cdot (\I\otimes_\Psi B)]$.
\end{proof}
\begin{remark}\label{defmorph}
Applying $\otimes_\Psi$ -functor to  $\pi_1\in\Mor_\Gamma(A_1,B_1)$ and $\pi_2\in\Mor_\Gamma(A_2,B_2)$ we get   $\pi_1\otimes_\Psi\pi_2\in\Mor_\Gamma(A_1\otimes_\Psi A_2,B_1\otimes_\Psi B_2)$. In particular for $C\in\Obj(\mathcal{C}^*_\Gamma)$ and $\pi\in\Mor_\Gamma(A,B)$   we have  $\pi\otimes_\Psi\id\in\Mor_\Gamma(A\otimes_\Psi C,B\otimes_\Psi C)$. 
\end{remark}

Let $A,B\in\Obj(\mathcal{C}^*_\Gamma)$ and let $\Sigma_{A, B}:A\otimes B\rightarrow B\otimes A$ be the flip isomorphism
\[\Sigma_{A, B}(a\otimes b) = b\otimes a\]
Noting that $\Sigma_{A,B}\in\Mor_\Gamma(A\otimes B,B\otimes A)$ 
\[\Sigma_{A,B}\circ\rho^{A\otimes B}_{\gamma} = \rho^{B\otimes A}_{\gamma}\circ\Sigma_{B,A}\] and applying Rieffel functor we get the family of isomorphisms
\[\Sigma^\Psi_{A,B} = \RD^\Psi(\Sigma_{A,B})\in\Mor(A^\Psi\otimes_\Psi B^\Psi,B^\Psi\otimes_\Psi A^\Psi)\] 
Defining $\sideset{^\Psi}{_{A,B}}\flip = \sideset{}{^\Psi_{A^{\overline{\Psi}},B^{\overline{\Psi}}}}\flip\in \Mor(A\otimes_\Psi B,B\otimes_\Psi A)$ 
one may easily check that the  hexagon equations \cite[Eq. (9.4)]{Majid}
\[\begin{split} \Sigma_{A\otimes B, C}&=(\Sigma_{A,C}\otimes\id_B)\circ(\id_A\otimes \Sigma_{B,C}) \\
 \Sigma_{A, B\otimes C}&= (\id_B\otimes\Sigma_{A,C})\circ(\Sigma_{A,B}\otimes\id_C)
 \end{split}
\]  still hold  after Rieffel deformation
\[\begin{split} \sideset{^\Psi}{_{A\otimes_\Psi B,C}}\flip&=(\sideset{^\Psi}{_{A,C}}\flip \otimes_\Psi\id_B)\circ(\id_A\otimes_\Psi \sideset{^\Psi}{_{ B,C}}\flip) \\\sideset{^\Psi}{_{A, B\otimes_\Psi C}}\flip&=(\id_B\otimes_\Psi \sideset{^\Psi}{_{ A,C}}\flip)\circ(\sideset{^\Psi}{_{A,B}}\flip \otimes_\Psi\id_C)\end{split}
\]
Thus we may summarize the results of this section in the following theorem where we adopt \cite[Definition 9.2.1]{Majid} of braided monoidal category. 
\begin{theorem}
Adopting the notation of this section, the triple $(\mathcal{C}^*_\Gamma,\otimes_\Psi,\sideset{^\Psi}{}\flip)$ is a braided monoidal category. 
\end{theorem}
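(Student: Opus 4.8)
The plan is to prove this \emph{not} by verifying every coherence axiom for $\otimes_\Psi$ from scratch, but by exhibiting $(\mathcal{C}^*_\Gamma,\otimes_\Psi,\sideset{^\Psi}{}\flip)$ as the transport of the undeformed braided structure along the autofunctor $\RD^\Psi$. The starting point is the observation that $(\mathcal{C}^*_\Gamma,\otimes,\flip)$ is already a (strict) symmetric monoidal category: the tensor functor $\otimes$ is strictly associative, the unit object is $\CC$ equipped with the trivial $\Gamma$-action, and the flip morphisms $\flip_{A,B}\in\Mor_\Gamma(A\otimes B,B\otimes A)$ satisfy the symmetry, naturality and hexagon relations because they do so on elementary tensors and extend uniquely. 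All of this is classical for the spatial tensor product; the only $\Gamma$-equivariant point to record is that $\flip_{A,B}$ is indeed covariant for the diagonal action, which is immediate.

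Next I would set $F=\RD^\Psi$ and invoke \eqref{invrd}, which shows that $F$ is an autofunctor of $\mathcal{C}^*_\Gamma$ with strict inverse $F^{-1}=\RD^{\overline{\Psi}}$; in particular $F$ is an isomorphism of categories, bijective on objects and on morphisms and preserving composition and identities. By construction $A\otimes_\Psi B = F\bigl(F^{-1}(A)\otimes F^{-1}(B)\bigr)$, with the analogous formula on morphisms, and the family in the statement is $\sideset{^\Psi}{_{A,B}}\flip = F\bigl(\flip_{F^{-1}(A),F^{-1}(B)}\bigr)$. The associativity constraint transported from the strict $\otimes$ is again the identity, in agreement with the Proposition, and the unit is $F(\CC)=\CC^\Psi=\CC$, since Rieffel deformation of an algebra carrying the trivial action returns the same algebra (the conjugating unitaries $U_{\widehat\gamma}$ act trivially on the abelian $\Gamma\ltimes\CC=\C^*(\Gamma)$). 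Thus $(\otimes_\Psi,\sideset{^\Psi}{}\flip)$ is precisely the $F$-image of the braided monoidal data of $(\mathcal{C}^*_\Gamma,\otimes,\flip)$.

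The core of the argument is then the transport-of-structure principle. Because $F$ is an isomorphism of categories, every diagram demanded by \cite[Definition 9.2.1]{Majid}\,---\,the pentagon, the triangle, the naturality squares for $\sideset{^\Psi}{}\flip$, and the two hexagons displayed above\,---\,is the $F$-image of the corresponding diagram for $(\mathcal{C}^*_\Gamma,\otimes,\flip)$ with $A,B,C$ replaced throughout by $F^{-1}(A),F^{-1}(B),F^{-1}(C)$. Here one uses the elementary identities $F^{-1}(A\otimes_\Psi B)=F^{-1}(A)\otimes F^{-1}(B)$, $F^{-1}(\id_A\otimes_\Psi g)=\id_{F^{-1}(A)}\otimes F^{-1}(g)$ and $F^{-1}(\sideset{^\Psi}{}\flip)=\flip$, so that the right-hand side of each deformed hexagon unfolds to $F$ applied to the right-hand side of the undeformed hexagon. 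Since a functor carries commuting diagrams to commuting diagrams and $F$ is bijective on morphisms, the deformed diagram commutes if and only if the undeformed one does; the latter holds because $\flip$ is a symmetry. This reduces the whole theorem to the classical symmetric monoidal structure of the spatial tensor product.

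The only step that genuinely requires care\,---\,and which I regard as bookkeeping rather than a real obstacle\,---\,is matching the transported data with the data named in the statement: confirming that the transport produces exactly the strict associator of the Proposition, that $\CC^\Psi=\CC$ so that the unit object is preserved, and that the transported braiding is the family $\sideset{^\Psi}{_{A,B}}\flip$ defined via $\RD^\Psi(\flip_{A^{\overline{\Psi}},B^{\overline{\Psi}}})$. Once these identifications are in place the coherence axioms follow automatically, and no analytic input beyond the functoriality and invertibility of $\RD^\Psi$ already established is needed.
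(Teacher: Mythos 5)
Your proposal is correct and follows essentially the same route as the paper: the paper also obtains associativity from the chain $(A^\Psi\otimes_\Psi B^\Psi)\otimes_\Psi C^\Psi=((A\otimes B)\otimes C)^\Psi$ together with the invertibility \eqref{invrd} of $\RD^\Psi$, defines $\sideset{^\Psi}{}\flip$ as the $\RD^\Psi$-image of the flip, and deduces the hexagons by applying the functor to the undeformed identities, which is exactly your transport-of-structure argument in less abstract packaging. Your version is slightly more complete in that it also records the points the paper leaves implicit (naturality of the braiding, the unit object $\CC^\Psi=\CC$, and the remaining coherence axioms), all of which follow correctly from the same principle.
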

\section{Rieffel deformation and braided quantum groups}\label{bqg}
In this section we  apply Rieffel deformation $\RD^\Psi$ to a quantum group $\GG$ with $\Gamma$ acting by automorphisms of $\GG$. We  adopt   \cite[Definition 2.3]{SolWor} and use multiplicative unitary $W$ as the main object of the quantum groups theory. In particular Haar weights are not  discussed in this section. For the theory of  manageable multiplicative unitaries   we refer to \cite{W5} and for the theory of modular multiplicative unitaries   we refer to \cite{W5sol}.
\begin{definition}\label{defsw}
Let $A$ be a $\C^*$
-algebra and $\Delta\in\Mor(A, A\otimes A)$. We say that a  pair $\GG = (A, \Delta)$
is a quantum group if there exists a Hilbert space $H$ and a modular multiplicative unitary $W\in\B(H\otimes H)$  such that $(A, \Delta)$ is isomorphic
to the $\C^*$-algebra with comultiplication associated to $W$. In
such a case we shall say that $W$ is a modular multiplicative unitary giving rise to the quantum
group $\GG$.
\end{definition}
Let $\GG$ be a quantum group with a modular multiplicative unitary $W\in\B(H\otimes H)$. When convenient  we write $A = \C_0(\GG)$ and $H = \Ltwo(\GG)$ ignoring the fact hat $H$ does not have to correspond to the GNS Hilbert space  for the Haar weight of $\GG$. 
 A quantum group $\GG$ has a dual quantum group $\widehat\GG$. The modular multiplicative   unitary of $\widehat\GG$ is given by $\widehat W = \Sigma W^* \Sigma$ where $\Sigma:H\otimes H\rightarrow H\otimes H$ is the flip  operator: $\Sigma (x\otimes y) = y\otimes x$ for $x,y\in H$.  Note that $\widehat{\widehat\GG} = \GG$. 
 
Let $\HH$ and $\GG$ be locally compact quantum groups. In order to introduce a concept of a quantum group homomorphism  $\pi:\HH\rightarrow \GG$ we need  the universal objects  $\HH^u = (\C_0^u(\HH),\Delta_{\HH}^u)$ and $\GG^u = (\C_0^u(\GG),\Delta_{\GG}^u)$ assigned to $\HH$ and $\GG$ respectively. For the definition of the universal version of a quantum group given by a  multiplicative unitary $W$  we refer to \cite[Definition 5.1]{SolWor}.  Let us emphasize that the universal version of a quantum group is not a quantum group in the sense of Definition  \ref{defsw}. 
The concept of quantum groups homomorphism  in the framework of Definition \ref{defsw}  was developed in \cite{MRW}.
\begin{definition}\label{hom} Let $\HH$ and $\GG$ be quantum groups and $\pi\in\Mor(\C_0^u(\GG),\C_0^u(\HH))$.  
We say that $\pi$ is a strong quantum homomorphism and write $\pi:\HH\rightarrow\GG$ if   \[
\Delta_{\HH}^u\circ\pi = (\pi\otimes\pi)\circ\Delta_{\GG}^u
\] When convenient    we refer to a strong quantum homomorphism $\pi:\HH\rightarrow\GG$ as homomorphism ignoring the strong and quantum adjectives. 
\end{definition}
\begin{remark}\label{redlev} Let $\HH = (\C_0(\HH),\Delta_\HH)$ and $\GG = (\C_0(\GG),\Delta_\GG)$ be quantum groups. A homomorphism   $\pi:\HH\rightarrow \GG$ has a  dual  homomorphism $\widehat\pi:\widehat\GG\rightarrow\widehat\HH$ assigned to it and we have $\widehat{\widehat\pi} = \pi$. For  a quantum group  $\mathbb{K}$ and a homomorphism $\sigma:\GG\rightarrow\mathbb{K}$ we have $\widehat{\sigma\circ\pi}=\widehat\pi\circ\widehat\sigma$. In particular $\pi:\HH\rightarrow\GG$ is invertible  if and only if $\widehat\pi$ is invertible. An isomorphism is homomorphism which is  invertible and an isomorphism  $\pi:\GG\rightarrow\GG$ is called an automorphism.
The set of automorphism is denoted by $\Aut(\GG)$.

Let $\pi :\HH\rightarrow \GG$ be an isomorphism. As was shown in \cite[Theorem 1.10]{DKSS} ,  $\pi$ descends to  the $\C^*$-isomorphism  
$\pi_r:\C_0(\GG)\rightarrow \C_0(\HH)$  such that
\[\Delta_\HH\circ\pi_r = (\pi_r\otimes\pi_r)\circ\Delta_{\GG}.\] In particular discussing the automorphism group $\Aut(\GG)$, the universal version $\GG^u$ is irrelevant. 
Let $\pi\in\Mor(\C_0(\GG),\C_0(\GG))$ be an automorphism of $\GG$ and $W$ the  modular multiplicative unitary viewed as $W\in\M(\C_0(\widehat\GG)\otimes \C_0(\GG))$. Then the dual  automorphism $\widehat\pi:\C_0(\widehat\GG)\rightarrow \C_0(\widehat\GG)$ is uniquely characterized by the equality
\begin{equation}\label{dualityrel}(\widehat\pi\otimes\id)W = (\id\otimes \pi)W\end{equation}
\end{remark}
\begin{remark}\label{rem2}
Let $\GG = (A,\Delta)$ be a quantum group. Then $\Aut(\GG)$ equipped with homomorphism composition  forms a group. Equipping   $\Aut(\GG)$ with the weakest topology such that for all $a\in A$ the mappings 
\[\Aut(\GG)\ni \pi\rightarrow \pi(a)\in A\] are norm-continuous, we view $\Aut(\GG)$  as a topological group.  Noting that the  automorphisms duality $\Aut(\GG)\ni\pi\mapsto\widehat\pi\ni\Aut(\widehat\GG)$ is composition reversing $\widehat{\sigma\circ\pi} = \widehat{\pi}\circ\widehat{\sigma}$ we see that   
the map \begin{equation}\label{autom}\Aut(\GG)\ni\pi\mapsto\widehat\pi\in\Aut(\widehat\GG)^{\textrm{op}}\end{equation} is a groups   isomorphism.  
The relation $(\widehat\pi\otimes\id)W = (\id\otimes\pi)W$ enables us to prove that the map \eqref{autom} is in fact  a  topological groups isomorphism. 
\end{remark}
\begin{definition}
Let $ \GG = (A,\Delta)$ be a quantum group, $\Gamma$ an abelian group and $(A,\rho^A)\in\Obj(\mathcal{C}^*_\Gamma)$.  If   $ \rho^A_\gamma\in\Aut(\GG)$ then we say that $\rho^A$ is an action of $\Gamma$  on $\GG$  by automorphisms.  
\end{definition}
Let $\rho^A$ be an action of $\Gamma$  on $A$ and $\GG = (A,\Delta)$. Then $\rho^A$ is an action by automorphism  if and only if $\Delta\in\Mor_{\Gamma}(A,A\otimes A)$. Clearly $\Gamma\ni\gamma\mapsto\rho_\gamma\in\Aut(\GG)$ is a continuous homomorphism. For $\gamma\in\Gamma$ we may consider the dual morphism $\widehat{\rho^A}_\gamma\in\Aut(\widehat\GG)$ (which should not be confused with the dual action $\widehat\rho_{\widehat\gamma}$). Since $\Gamma$ is abelian the map $\Gamma\ni\gamma\mapsto \widehat{\rho^{A}}_\gamma\in\Aut(\C_0(\widehat\GG))$ is an action of $\Gamma$ on $\widehat\GG$ by automorphisms. 
In what follows $\widehat{\rho^{A}}$ will be denoted by $\rho^{\widehat{A}}$.
\subsection{Rieffel deformation of a quantum group with an action of $\Gamma$ by automorphisms}
Let $\GG = (A,\Delta)$ be a quantum group with an action $\rho$ of $\Gamma$ by automorphisms  
and let $\Psi$ be   a $2$-cocyle on $\widehat\Gamma$. Applying  $\RD^\Psi$ to $A$ we get a $\C^*$-algebra $A^\Psi$. In this section we show that $A^\Psi$ is equipped with a morphism  $\Delta^\Psi\in\Mor_\Gamma(A^\Psi,A^\Psi\otimes_\Psi A^\Psi)$ which is coassociative in the sense of the deformed tensor functor $\otimes_\Psi$ and satisfies  cancellation law; we shall refer to $\Delta^\Psi$ as a braided comultiplication. Assuming $\Psi$ to be a bicharacter (which we do in what follows)
\[\begin{split}\Psi(\widehat\gamma_1,\widehat\gamma_2 + \widehat\gamma_3) &= \Psi(\widehat\gamma_1,\widehat\gamma_2) \Psi(\widehat\gamma_1,\widehat\gamma_3)\\
\Psi(\widehat\gamma_1+\widehat\gamma_2,\widehat\gamma_3) &= \Psi(\widehat\gamma_1,\widehat\gamma_3) \Psi(\widehat\gamma_2,\widehat\gamma_3)
\end{split}
\]  we deform the multiplicative unitary $W^\Psi$ and introduce   a certain deformation of $\widehat{\GG}$ that is a good candidate for a  dual of $\GG^\Psi$. 

To a bicharacter $\Psi$ we assign a group homomorphism $\tilde\Psi:\widehat\Gamma\rightarrow \Gamma$ uniquely characterized by
\[\langle\widehat\gamma',\tilde\Psi(\widehat\gamma)\rangle = \Psi(\widehat\gamma',\widehat\gamma).\]
The unitary family $U_{\widehat\gamma}$ entering \eqref{defdulact} is in this case given by 
$U_{\widehat\gamma} = \lambda_{\tilde\Psi(\widehat\gamma)}$ and we have 
\[\widehat\rho^\Psi_{\widehat\gamma}(b) = \lambda^*_{\tilde\Psi(\widehat\gamma)}\widehat\rho_{\widehat\gamma}(b) \lambda_{\tilde\Psi(\widehat\gamma)}.\]
 Applying  Rieffel functor $\RD^\Psi$ to $(A,\rho)$ we get
 $(A^\Psi,\rho)$ and applying it to  $\Delta\in\Mor_\Gamma(A,A\otimes A)$  we get   $\Delta^\Psi\in\Mor_\Gamma(A^\Psi,(A\otimes A)^\Psi)$. Using \eqref{inter} 
we obtain \[ \Delta^\Psi\in\Mor_\Gamma(A^\Psi,A^\Psi\otimes_\Psi A^\Psi) \] In the formulation of the next proposition we use notation introduced in Remark \ref{defmorph}.
 \begin{proposition} Let 
 $\Delta^\Psi\in\Mor(A^\Psi,A^\Psi\otimes_\Psi A^\Psi)$ be the morphisms introduced above. Then
\begin{equation}\label{coassoc}(\Delta^\Psi\otimes_\Psi\id)\circ\Delta^\Psi = (\id\otimes_\Psi\Delta^\Psi)\circ\Delta^\Psi\end{equation} \end{proposition}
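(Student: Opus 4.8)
The plan is to deduce the deformed coassociativity \eqref{coassoc} directly from the ordinary coassociativity of $\Delta$ by applying the functor $\RD^\Psi$, exploiting both its functoriality and its invertibility $(\RD^\Psi)^{-1} = \RD^{\overline\Psi}$ recorded in \eqref{invrd}. No analytic input beyond what is already established is needed; the whole statement is a formal consequence of the categorical structure.

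First I would rewrite the two legs of \eqref{coassoc} as images of undeformed morphisms under $\RD^\Psi$. Unwinding the definition $\otimes_\Psi = \RD^\Psi\circ\otimes\circ(\RD^{\overline\Psi}\times\RD^{\overline\Psi})$ on the pair of morphisms $(\Delta^\Psi,\id_{A^\Psi})$ gives
\[\Delta^\Psi\otimes_\Psi\id = \RD^\Psi\bigl(\RD^{\overline\Psi}(\Delta^\Psi)\otimes\RD^{\overline\Psi}(\id_{A^\Psi})\bigr).\]
Since $\Delta^\Psi = \RD^\Psi(\Delta)$ and $\RD^{\overline\Psi}\circ\RD^\Psi = \id$ by \eqref{invrd}, we have $\RD^{\overline\Psi}(\Delta^\Psi) = \Delta$; likewise, as a functor sends identities to identities, $\RD^{\overline\Psi}(\id_{A^\Psi}) = \id_A$. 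Hence $\Delta^\Psi\otimes_\Psi\id = \RD^\Psi(\Delta\otimes\id)$, and symmetrically $\id\otimes_\Psi\Delta^\Psi = \RD^\Psi(\id\otimes\Delta)$.

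Next I would use functoriality of $\RD^\Psi$ to collapse each composite:
\[(\Delta^\Psi\otimes_\Psi\id)\circ\Delta^\Psi = \RD^\Psi(\Delta\otimes\id)\circ\RD^\Psi(\Delta) = \RD^\Psi\bigl((\Delta\otimes\id)\circ\Delta\bigr),\]
and analogously $(\id\otimes_\Psi\Delta^\Psi)\circ\Delta^\Psi = \RD^\Psi\bigl((\id\otimes\Delta)\circ\Delta\bigr)$. Since $\GG = (A,\Delta)$ is a genuine quantum group its comultiplication is coassociative, that is $(\Delta\otimes\id)\circ\Delta = (\id\otimes\Delta)\circ\Delta$ as morphisms in $\Mor_\Gamma(A,A\otimes A\otimes A)$; applying $\RD^\Psi$ to both sides yields \eqref{coassoc}, the target $(A\otimes A\otimes A)^\Psi$ being identified with $A^\Psi\otimes_\Psi A^\Psi\otimes_\Psi A^\Psi$ through the associativity of $\otimes_\Psi$ established in the preceding Proposition.

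The only point requiring care is the untwisting step: one must confirm that the identity morphism is returned by $\RD^{\overline\Psi}$ (so that the tensor leg indexed by $\id$ really untwists to $\Delta\otimes\id$ rather than to some conjugate of it), and that the triple-product associativity identifications invoked on the deformed side are exactly those furnished by the associativity of $\otimes_\Psi$. Once this bookkeeping is in place there is no genuine obstacle — the statement follows formally from functoriality of $\RD^\Psi$, its invertibility \eqref{invrd}, and the undeformed coassociativity of $\Delta$.
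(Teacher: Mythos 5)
Your proposal is correct and follows essentially the same route as the paper: the paper's proof is precisely the observation that both sides of \eqref{coassoc} are $\RD^\Psi\bigl((\Delta\otimes\id)\circ\Delta\bigr)$ and $\RD^\Psi\bigl((\id\otimes\Delta)\circ\Delta\bigr)$, which coincide by undeformed coassociativity. You merely spell out in more detail the untwisting step identifying $\Delta^\Psi\otimes_\Psi\id$ with $\RD^\Psi(\Delta\otimes\id)$, which the paper leaves implicit.
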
 
\begin{proof}
Identity \eqref{coassoc} is obtained by application of the Rieffel functor $\RD^\Psi$ to $(\Delta\otimes\id),(\id\otimes\Delta)\in\Mor_\Gamma(A,A^{3\otimes})$ and $\Delta\in\Mor_\Gamma(A,A\otimes A)$:
\[\begin{split}(\Delta^\Psi\otimes_\Psi\id)\circ\Delta^\Psi &=\RD^\Psi((\Delta\otimes\id)\circ\Delta)\\& = \RD^\Psi((\id\otimes\Delta)\circ\Delta) = (\id\otimes_\Psi\Delta^\Psi)\circ\Delta^\Psi\end{split}\] 
\end{proof}

\begin{proposition}
Adopting the above notation we have 
\[[(A^\Psi\otimes_\Psi\I)\cdot \Delta^\Psi(A^\Psi)] = [(\I\otimes_\Psi A^\Psi)\cdot \Delta^\Psi(A^\Psi)]= A^\Psi\otimes_\Psi A^\Psi \]
\end{proposition}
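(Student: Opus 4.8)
The plan is to obtain the braided cancellation law from the ordinary cancellation law for $\GG=(A,\Delta)$ by transporting the latter through the functor $\RD^\Psi$, in complete analogy with the way the identity $A^\Psi\otimes_\Psi B^\Psi=[(A^\Psi\otimes_\Psi\I)\cdot(\I\otimes_\Psi B^\Psi)]$ was derived in the preceding proposition. The single external ingredient is Lemma 3.4 of \cite{kasp1}, which turns a density relation $[X\cdot Y]=C$ between $\Gamma$-invariant subsets $X,Y\subset\M(C)$ into the deformed relation $[X^\Psi\cdot Y^\Psi]=C^\Psi$; everything else is bookkeeping with \eqref{inter} and \eqref{sugnot}.

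First I would recall that, since $\GG$ arises from a modular multiplicative unitary $W$, the underlying $\C^*$-bialgebra satisfies the cancellation (strong density) conditions of \cite{SolWor}, namely
\[[(A\otimes\I)\cdot\Delta(A)]=A\otimes A=[(\I\otimes A)\cdot\Delta(A)].\]
Next I would check that the three subsets $A\otimes\I$, $\I\otimes A$ and $\Delta(A)$ are $\Gamma$-invariant subsets of $\M(A\otimes A)$. For the first two this is immediate from $\rho^{A\otimes A}_\gamma(a\otimes\I)=\rho^A_\gamma(a)\otimes\I$ and $\rho^{A\otimes A}_\gamma(\I\otimes a)=\I\otimes\rho^A_\gamma(a)$, while for $\Delta(A)$ it follows from $\Delta\in\Mor_\Gamma(A,A\otimes A)$, which gives $\rho^{A\otimes A}_\gamma(\Delta(a))=\Delta(\rho^A_\gamma(a))$ and hence $\rho^{A\otimes A}_\gamma(\Delta(A))=\Delta(A)$.

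Finally I would apply Lemma 3.4 of \cite{kasp1} with $C=A\otimes A$ to the two pairs $(A\otimes\I,\Delta(A))$ and $(\I\otimes A,\Delta(A))$. Using \eqref{sugnot} to identify $(A\otimes\I)^\Psi=A^\Psi\otimes_\Psi\I$ and $(\I\otimes A)^\Psi=\I\otimes_\Psi A^\Psi$, the functoriality relation $\Delta^\Psi=\RD^\Psi(\Delta)$ to identify the deformation of the subalgebra $\Delta(A)$ with $\Delta^\Psi(A^\Psi)$, and \eqref{inter} to identify $(A\otimes A)^\Psi=A^\Psi\otimes_\Psi A^\Psi$, the lemma delivers exactly
\[[(A^\Psi\otimes_\Psi\I)\cdot\Delta^\Psi(A^\Psi)]=A^\Psi\otimes_\Psi A^\Psi=[(\I\otimes_\Psi A^\Psi)\cdot\Delta^\Psi(A^\Psi)].\]
The step I expect to require the most care is this last identification: one must verify that the Rieffel deformation of the image subalgebra $\Delta(A)\subset\M(A\otimes A)$ really coincides with $\Delta^\Psi(A^\Psi)\subset\M((A\otimes A)^\Psi)$. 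This is where the covariance $\Delta\in\Mor_\Gamma$ and the explicit description $\pi^\Psi=\lambda\ltimes\pi|_{A^\Psi}$ of the deformed morphism must be fed into the hypotheses of Lemma 3.4, ensuring that the pieces $X,Y$ are of the Landstad type to which the lemma applies and that their deformations are the expected ones.
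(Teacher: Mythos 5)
Your proposal is correct and follows essentially the same route as the paper, which likewise deduces the statement by applying Lemma 3.4 of the Rieffel-deformation paper to the pairs $A\otimes\I,\ \Delta(A)\subset A\otimes A$ and $\I\otimes A,\ \Delta(A)\subset A\otimes A$, using the undeformed cancellation law as input. Your extra care about identifying the deformation of the subalgebra $\Delta(A)$ with $\Delta^\Psi(A^\Psi)$ is a point the paper leaves implicit, but it does not change the argument.
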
 
\begin{proof}  The proof is a straightforward application of  Lemma 3.4 of \cite{Kasp} to $A\otimes 1, \Delta(A)\subset A\otimes A$ and $1\otimes A, \Delta(A)\subset A\otimes A$ accordingly.
\end{proof}

Let $\rho^A$ be an action of $\Gamma$ on $\GG = (A,\Delta)$ by automorphisms. Using Remark \ref{rem2} we get the action $\rho^{\widehat A}$  of $\Gamma$ on $\widehat\GG$ by automorphisms. 
Let $\Phi\in\mathcal{COC}_2(\widehat\Gamma)$ be the flip of $\Psi$ 
\[\Phi(\widehat\gamma_1,\widehat\gamma_2) = \Psi(\widehat\gamma_2,\widehat\gamma_1).\]  
Applying Rieffel deformation $\RD^\Phi$ to $(\widehat{A},\rho^{\widehat{A}})$  we get $\widehat\GG^\Phi = (\widehat{A}^\Phi,\widehat{\Delta}^\Phi)$. It turns out that  certain aspects of $\widehat\GG$ - $\GG$ duality have its counterpart on the  $\widehat\GG^\Phi$ - $\GG^\Psi$ level. In order to explain this  let us recall that multiplicative unitary $W\in\B(H\otimes H)$ may be viewed as the bicharacter  $W\in\M(\widehat A\otimes A)$:
\[\begin{split}
(\id\otimes\Delta)W &= W_{12}W_{13}\\
(\widehat\Delta\otimes\id)W &= W_{23}W_{13}
\end{split}
\]
Moreover  slicing $W$ with the normal functionals  we recover $A$ and $\widehat A$:
\begin{equation}\label{slices}\begin{split}
A&= \{(\omega\otimes\id)W:\omega\in\B(H)_*\}^{\textrm{cls}}\\
 \widehat A&= \{(\id\otimes\omega)W:\omega\in\B(H)_*\}^{\textrm{cls}}
\end{split}
\end{equation}
In what follows we shall construct certain unitary element $W^\Psi$ satisfying the braided counterpart of the bicharacter identity and   of the slice equality \eqref{slices}. Noting that 
\[\begin{split}\Psi&\in\M(\C^*(\Gamma)\otimes\C^*(\Gamma))\subset\M(\Gamma\ltimes\widehat{A}\otimes\Gamma\ltimes A)\\
W&\in\M(\widehat{A}\otimes A)\subset\M(\Gamma\ltimes\widehat{A}\otimes\Gamma\ltimes A)
\end{split}\] we define 
\begin{equation}\label{wpsi}
W^\Psi = \Psi W\Psi^*\in\M(\Gamma\ltimes\widehat{A}\otimes\Gamma\ltimes A)\end{equation}

Representing  $\Gamma\ltimes A$ and  $\Gamma\ltimes \widehat{A}$  on $L = \Ltwo(\Gamma)\otimes \Ltwo(\GG)$ as described in Remark \ref{repcp} we may view $W^\Psi$ as an operator on $L\otimes L$. The algebra of compact operators on $L$ is denoted by $\mathcal{K}(L)$. 
\begin{theorem}\label{bichar}
Adopting the above notation we have
\begin{equation}\label{slice1}
A^\Psi = \{(\omega\otimes \id)(W^\Psi):\omega\in\B(L)_*\}^{\textrm{cls}}.\end{equation}
\end{theorem}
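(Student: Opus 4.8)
The plan is to prove the two inclusions between $A^\Psi$ and $S:=\{(\omega\otimes\id)(W^\Psi):\omega\in\B(L)_*\}^{\textrm{cls}}$ separately. Recall that $A^\Psi$ is the Landstad algebra of the $\Gamma$-product $(\Gamma\ltimes A,\widehat\rho^\Psi,\lambda)$, so the inclusion $S\subseteq A^\Psi$ amounts to checking that each slice $(\omega\otimes\id)(W^\Psi)$ satisfies the three Landstad conditions \eqref{lancod} with respect to the deformed dual action $\widehat\rho^\Psi$. The decisive point is condition~1, the $\widehat\rho^\Psi$-invariance; conditions~2 and~3 and the reverse inclusion are more technical but follow the same circle of ideas.

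For condition~1 I would first record two invariance properties of the building blocks. Since $W\in\M(\widehat A\otimes A)$ and the dual actions fix $\widehat A$ and $A$ pointwise, $W$ is fixed by $\widehat\rho_{\widehat\gamma_1}\otimes\widehat\rho_{\widehat\gamma_2}$ for all $\widehat\gamma_1,\widehat\gamma_2$. Next, because the dual action rescales $\lambda$ and $\Psi$ is a bicharacter, a direct Fourier computation on $\M(\C^*(\Gamma)\otimes\C^*(\Gamma))$ gives $(\id\otimes\widehat\rho_{\widehat\gamma})(\Psi)=(\lambda_{\tilde\Psi(\widehat\gamma)}\otimes\I)\Psi$. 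Combining these with \eqref{wpsi} yields $(\id\otimes\widehat\rho_{\widehat\gamma})(W^\Psi)=(\lambda_{\tilde\Psi(\widehat\gamma)}\otimes\I)W^\Psi(\lambda_{\tilde\Psi(\widehat\gamma)}\otimes\I)^*$.

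The crux is then to absorb this residual conjugation using the duality relation \eqref{dualityrel}. Writing $\rho^{\widehat A}_\gamma(\cdot)=\lambda_\gamma(\cdot)\lambda_\gamma^*$ and $\rho_\gamma(\cdot)=\lambda_\gamma(\cdot)\lambda_\gamma^*$ inside the two crossed products, \eqref{dualityrel} reads $(\lambda_\gamma\otimes\I)W(\lambda_\gamma\otimes\I)^*=(\I\otimes\lambda_\gamma)W(\I\otimes\lambda_\gamma)^*$, i.e.\ $W$ commutes with the antidiagonal $\lambda_\gamma\otimes\lambda_{-\gamma}$; since $\Psi$ is a function of the two mutually commuting copies of $\lambda$, so does $W^\Psi$. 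Feeding $\gamma=\tilde\Psi(\widehat\gamma)$ into this and using $U_{\widehat\gamma}=\lambda_{\tilde\Psi(\widehat\gamma)}$ from \eqref{defdulact}, the second-leg conjugation by $U_{\widehat\gamma}$ built into $\widehat\rho^\Psi$ combines with the first-leg factor $\lambda_{\tilde\Psi(\widehat\gamma)}$ above into exactly this antidiagonal, which commutes through $W^\Psi$. Hence $(\id\otimes\widehat\rho^\Psi_{\widehat\gamma})(W^\Psi)=W^\Psi$, and slicing the first leg gives $\widehat\rho^\Psi_{\widehat\gamma}\big((\omega\otimes\id)(W^\Psi)\big)=(\omega\otimes\id)(W^\Psi)$, which is Landstad condition~1.

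For conditions~2 and~3 and for the reverse inclusion I would expand $\Psi=\sum_i a_i\otimes b_i$ and $\Psi^*=\sum_j c_j\otimes d_j$: the first-leg factors are absorbed into the slicing functional while the second-leg factors $b_i,d_j\in\M(\C^*(\Gamma))$ commute past the first-leg slice and factor out, rewriting $(\omega\otimes\id)(W^\Psi)$ as a strictly convergent combination $\sum_{i,j} b_i\,[(\omega'_{ij}\otimes\id)(W)]\,d_j$ with $\omega'_{ij}\in\B(L)_*$. The undeformed slice equality \eqref{slices}, read in the representation on $L$, identifies $(\omega'_{ij}\otimes\id)(W)\in A$, so every slice lies in $[\M(\C^*(\Gamma))\cdot A\cdot\M(\C^*(\Gamma))]\subseteq\M(\Gamma\ltimes A)$; conditions~2 and~3 then follow from the corresponding properties of $A$ together with $[\C^*(\Gamma)\cdot A\cdot\C^*(\Gamma)]=\Gamma\ltimes A$. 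The same expansion exhibits $S$ as the closed span of the $\Psi$-twists of $A$, and the reverse inclusion $A^\Psi\subseteq S$ comes down to matching this description with the Landstad algebra produced by $\RD^\Psi$ in \cite{Kasp}, i.e.\ to the surjectivity of the slice map onto all of $A^\Psi$. I expect this surjectivity to be the main obstacle: one must check that letting $\omega$ range over the full $\B(L)_*$ (rather than over $\B(H)_*$) produces enough twisted elements to exhaust $A^\Psi$, which I would settle via the nondegeneracy $[\C^*(\Gamma)\cdot S]=\Gamma\ltimes A$ and the essential uniqueness of the Landstad algebra of a $\Gamma$-product.
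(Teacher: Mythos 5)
Your proposal follows essentially the same route as the paper: Landstad condition~1 is verified exactly as in the text, via $(\id\otimes\widehat\rho_{\widehat\gamma})(\Psi)=\Psi(\lambda_{\tilde\Psi(\widehat\gamma)}\otimes\I)$ and the duality relation \eqref{dualityrel} turning the second-leg conjugation of $W$ into a first-leg one, and your treatment of conditions~2--3 and of surjectivity (absorbing the $\C^*(\Gamma)$ factors into $\omega$ so that $[\C^*(\Gamma)\cdot S\cdot\C^*(\Gamma)]=[\C^*(\Gamma)\cdot A\cdot\C^*(\Gamma)]=\Gamma\ltimes A$, then invoking the characterization of the Landstad algebra) is precisely the paper's computation of the set $\mathcal{V}$ followed by Lemma~2.6 of \cite{Kasp}. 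The only cosmetic difference is your heuristic expansion $\Psi=\sum_i a_i\otimes b_i$, which the paper avoids by working directly with closed linear spans of $(y_1\otimes x_1)\Psi W\Psi^*(y_2\otimes x_2)$.
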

\begin{proof} We begin by checking   Landstad conditions \eqref{lancod} for $(\omega\otimes \id)(W^\Psi)$. 
In order to check  $(\id\otimes\widehat\rho^\Psi)$
- invariance of $W^\Psi$ let us note that 
\[\begin{split}
(\id\otimes\rho^\Psi_{\widehat\gamma})(\Psi) &= \Psi(\lambda_{\tilde\Psi(\widehat\gamma)}\otimes\I)\\
(\id\otimes\rho^\Psi_{\widehat\gamma})(W) & = (\I\otimes \lambda_{\tilde\Psi(\widehat\gamma)})^*W(\I\otimes \lambda_{\tilde\Psi(\widehat\gamma)})\\
(\I\otimes \lambda_{\tilde\Psi(\widehat\gamma)})^*W(\I\otimes \lambda_{\tilde\Psi(\widehat\gamma)})&= (\lambda_{\tilde\Psi(\widehat\gamma)}\otimes \I)^*W(\lambda_{\tilde\Psi(\widehat\gamma)}\otimes \I)
\end{split}\] 
where the last equality follows from $\rho^A$ - $\rho^{\widehat{A}}$ relation  \eqref{dualityrel}
\[(\id\otimes\rho^A_\gamma) W = (\rho^{\widehat A}_\gamma\otimes\id) W.\]  
We compute
\[\begin{split}
(\id\otimes\widehat\rho^\Psi_{\widehat\gamma})(\Psi W\Psi^*) &=  \Psi(\lambda_{\tilde\Psi(\widehat{\gamma})}\otimes \id)(\id\otimes \lambda_{\tilde\Psi(\widehat{\gamma})})^*W(\lambda_{\tilde\Psi(\widehat{\gamma})}\otimes \id)^*(\id\otimes \lambda_{\tilde\Psi(\widehat{\gamma})})\Psi^*\\& = 
\Psi(\rho^{\widehat{A}}_{\tilde\Psi(\widehat\gamma)}\otimes\rho^A_{-\tilde\Psi(\widehat\gamma)})(W)\Psi^*=W^\Psi
\end{split}\] 

Checking that the map $\Gamma\ni\gamma\mapsto\lambda_\gamma(\omega\otimes\id)(W^\Psi)\lambda_\gamma^*$ is norm continuous (which is the second Landstad condition)  boils down to the simple identity 
\[ \lambda_\gamma(\omega\otimes\id)(W^\Psi)\lambda_\gamma^*= (\lambda_\gamma^*\cdot\omega\cdot \lambda_\gamma\otimes\id)(W^\Psi) \] and the fact that the map $\Gamma\ni\gamma\mapsto\lambda_\gamma^*\cdot\omega\cdot \lambda_\gamma \in\B(\Ltwo(L))_* $ is norm continuous. 

The third Landstad condition in our case has the form 
\[x\left((\omega\otimes\id)(W^\Psi)\right)y\in \Gamma\ltimes A \textrm{ for } x,y\in\C^*(\Gamma) \textrm{ and } \omega\in\B(\Ltwo(L))_*\] In order to check it we are reasoning as in  the proof of Theorem 4.4 of \cite{Kasp}. Actually the   argument   shows that \[A^\Psi = \{(\omega\otimes \id)(W^\Psi):\omega\in\B(\Ltwo(L))_*\}^{\textrm{cls}}.\] Let us  first define  
\[\begin{split} 
\mathcal{V} & =  \{x_1\cdot (\omega\otimes \id)(W^\Psi)\cdot x_2:\omega\in\B(\Ltwo(L))_*,\,\,x_1,x_2\in\C^*(\Gamma)\}^{\textrm{cls}}
\end{split}\]
and  note that 
\[
\begin{split}
\mathcal{V} & =  \{x_1\cdot (y_2\cdot\omega\cdot y_1\otimes \id)(W^\Psi)\cdot x_2:\omega\in\B(\Ltwo(L))_*,\,\,x_1,x_2,y_1,y_2\in\C^*(\Gamma)\}^{\textrm{cls}}\\& =  \{(\omega \otimes \id)((y_1\otimes x_1)\Psi W \Psi^* (y_2\otimes x_2)):\omega\in\B(\Ltwo(L))_*,\,\,x_1,x_2,y_1,y_2\in\C^*(\Gamma)\}^{\textrm{cls}}\\& =\{(\omega \otimes \id)((y_1\otimes x_1)W  (y_2\otimes x_2)):\omega\in\B(\Ltwo(L))_*,\,\,x_1,x_2,y_1,y_2\in\C^*(\Gamma)\}^{\textrm{cls}}\\& =\overline{\C^*(\Gamma)A\C^*(\Gamma)}^{\|\cdot\|}= \Gamma\ltimes A
\end{split} \] 
Using Lemma 2.6 of \cite{Kasp} we conclude \eqref{slice1}.
\end{proof}
The embeddings of $A^\Psi$ into the first and the second leg of  $A^\Psi\otimes_\Psi A^\Psi$ are denoted $\iota_1^{A^\Psi},\iota_2^{A^\Psi}$ respectively:
\[\begin{split}\iota_1^{A^\Psi}(a) &= a\otimes_\Psi\I\\
\iota_2^{A^\Psi}(a) &= \I\otimes_\Psi a
\end{split}
\] for any $a\in A^\Psi$. The next theorem may be considered as a step towards the  counterpart of the bicharacter equation satisfied by $W^\Psi$ which is expected to have the form 
\[ (\id\otimes\Delta^\Psi)(W^\Psi) =(\id\otimes \iota^{A^\Psi}_1)(W^\Psi)(\id\otimes \iota^{A^\Psi}_2)(W^\Psi)\] Since the equality \eqref{slice1} does not automatically imply that $W^\Psi\in\M(\mathcal{K}(L)\otimes A^\Psi)$ one should be careful writing $(\id\otimes\Delta^\Psi)(W^\Psi)$. We circumvent this using the defining equality $\Delta^\Psi =\lambda\ltimes\Delta|_{A^\Psi}$ where $\lambda\ltimes\Delta\in\Mor(\Gamma\ltimes A,\Gamma\ltimes(A\otimes A))$ is the extension of $\Delta\in\Mor_{\Gamma}(A,A\otimes A)$ to the crossed product $\C^*$ - algebras. Similarly  since the canonical embeddings $\iota^{A}_1,\iota^{A}_2\in\Mor_\Gamma(A,A\otimes A)$ are covariant we may extend them to the corresponding crossed product $\C^*$ - algebras which we denote by $\lambda\ltimes\iota^A_1,\lambda\ltimes\iota^A_2\in\Mor(\Gamma\ltimes A,\Gamma\ltimes (A\otimes A))$ respectively and one has \[\begin{split}\iota_1^{A^\Psi} &= \lambda\ltimes\iota^A_1|_{A^\Psi}\\ 
\iota_2^{A^\Psi} &= \lambda\ltimes\iota^A_2|_{A^\Psi}
\end{split}\]
\begin{theorem}
Let $W^\Psi$ be the operator defined in \eqref{wpsi}. Then
\begin{equation}\label{delw}(\id\otimes(\lambda\ltimes\Delta))(W^\Psi) =(\id\otimes\lambda\ltimes \iota^{A}_1)(W^\Psi)(\id\otimes\lambda\ltimes \iota^{A}_2)(W^\Psi)\end{equation}
In particular if $W^\Psi\in\M(\mathcal{K}(L)\otimes A^\Psi)$ then 
\begin{equation}\label{addassum}(\id\otimes\Delta^\Psi)(W^\Psi) =(\id\otimes \iota^{A^\Psi}_1)(W^\Psi)(\id\otimes \iota^{A^\Psi}_2)(W^\Psi)\end{equation}
\end{theorem}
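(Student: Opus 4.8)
The plan is to expand both sides of \eqref{delw} using multiplicativity of the crossed-product extensions and to exploit the fact that $\Delta$, $\iota^A_1$ and $\iota^A_2$ all restrict to one and the same map on the $\C^*(\Gamma)$-part of $\Gamma\ltimes A$; the conjugating copies of $\Psi$ will then telescope exactly as in the undeformed bicharacter identity for $W$. First I would record how the three morphisms $\lambda\ltimes\Delta,\ \lambda\ltimes\iota^A_1,\ \lambda\ltimes\iota^A_2\in\Mor(\Gamma\ltimes A,\Gamma\ltimes(A\otimes A))$ act on the two constituents of $W^\Psi=\Psi W\Psi^*$. By the defining property of the crossed-product extension, namely $(\lambda\ltimes\pi)\circ\iota^{\C^*(\Gamma)}=\lambda$, each of these three morphisms sends $\lambda_\gamma\in\M(\Gamma\ltimes A)$ to the same diagonal generator $\lambda_\gamma\in\M(\Gamma\ltimes(A\otimes A))$. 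Since $\Psi\in\M(\C^*(\Gamma)\otimes\C^*(\Gamma))$ lives entirely in the $\C^*(\Gamma)$-leg, the three maps therefore send $\Psi$ to one and the same unitary, which I still denote $\Psi$. On $A\subset\Gamma\ltimes A$ the three morphisms restrict to $\Delta,\iota^A_1,\iota^A_2$ respectively, so, applying them to the second leg of $W\in\M(\widehat A\otimes A)$, one gets $(\id\otimes\lambda\ltimes\iota^A_1)(W)=W_{12}$, $(\id\otimes\lambda\ltimes\iota^A_2)(W)=W_{13}$ and $(\id\otimes\lambda\ltimes\Delta)(W)=(\id\otimes\Delta)(W)$, all regarded in $\M(\Gamma\ltimes\widehat A\otimes\Gamma\ltimes(A\otimes A))$ through the canonical embeddings.

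Next I would combine the bicharacter identity $(\id\otimes\Delta)(W)=W_{12}W_{13}$ with multiplicativity. Applying $\id\otimes(\lambda\ltimes\Delta)$ to $\Psi W\Psi^*$ and using the first step yields $\Psi\,W_{12}W_{13}\,\Psi^*$ for the left-hand side of \eqref{delw}. Expanding the right-hand side gives $\bigl(\Psi\,W_{12}\,\Psi^*\bigr)\bigl(\Psi\,W_{13}\,\Psi^*\bigr)$, and the middle factor $\Psi^*\Psi=\I$ cancels, leaving $\Psi\,W_{12}W_{13}\,\Psi^*$ as well; hence the two sides coincide, which is \eqref{delw}.

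Finally, for the \emph{in particular} clause I would simply restrict. Under the hypothesis $W^\Psi\in\M(\mathcal{K}(L)\otimes A^\Psi)$ the slice $(\id\otimes\Delta^\Psi)(W^\Psi)$ is well defined, and by the identifications $\Delta^\Psi=\lambda\ltimes\Delta|_{A^\Psi}$ and $\iota^{A^\Psi}_i=\lambda\ltimes\iota^A_i|_{A^\Psi}$ recalled before the theorem, \eqref{addassum} is precisely the restriction of \eqref{delw} to the $A^\Psi$-leg. I expect the only genuine subtlety to lie exactly here: a priori $(\id\otimes\Delta^\Psi)(W^\Psi)$ need not make sense, because $\Delta^\Psi$ is defined only on $A^\Psi$ and not on all of $\Gamma\ltimes A$, which is why \eqref{delw} is stated at the crossed-product level while \eqref{addassum} requires the extra containment assumption. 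The remainder is bookkeeping, chiefly checking that the three extensions really agree on $\C^*(\Gamma)$ and that the leg structure is preserved under the embedding $\M(\widehat A\otimes(A\otimes A))\hookrightarrow\M(\Gamma\ltimes\widehat A\otimes\Gamma\ltimes(A\otimes A))$.
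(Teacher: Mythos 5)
Your proposal is correct and takes essentially the same route as the paper's own proof: both rest on the observation that $\lambda\ltimes\Delta$, $\lambda\ltimes\iota^A_1$ and $\lambda\ltimes\iota^A_2$ coincide on $\C^*(\Gamma)$ (hence send $\Psi$ to one and the same unitary), on the identity $(\id\otimes\Delta)(W)=W_{12}W_{13}$, and on the resulting telescoping $\bigl(\Psi W_{12}\Psi^*\bigr)\bigl(\Psi W_{13}\Psi^*\bigr)=\Psi W_{12}W_{13}\Psi^*$, with the second claim obtained by restricting to $A^\Psi$ under the extra multiplier assumption. Your write-up is in fact slightly cleaner than the paper's, which omits a conjugate on the final $\Psi$-factor in its displayed computation.
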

\begin{proof}
In order to prove \eqref{delw} we compute
\[\begin{split} 
 (\id\otimes\lambda\ltimes\Delta)(\Psi W \Psi^*)&=
(\id\otimes\lambda\ltimes\Delta)(\Psi) \left((\id\otimes\Delta)W \right) (\id\otimes\lambda\ltimes\Delta)(\Psi)^*\\
&= (\id\otimes\lambda\ltimes\Delta)(\Psi) W_{12}W_{13}(\id\otimes\lambda\ltimes\Delta)(\Psi)^*\\&=
(\id\otimes\Gamma\ltimes\iota^{A}_1)(\Psi)W_{12}W_{13}(\id\otimes\Gamma\ltimes\iota^{A}_2)(\Psi)
\end{split}\] In the last step  we used the equality of three respective restrictions \[\lambda\ltimes\Delta|_{\C^*(\Gamma)} = \Gamma\ltimes\iota^A_1|_{\C^*(\Gamma)} = \Gamma\ltimes\iota^A_2|_{\C^*(\Gamma)}\]  all of them coinciding with the standard embedding  $\C^*(\Gamma)\subset\M(\Gamma\ltimes (A\otimes A))$. Noting that \[\begin{split}(\id\otimes\Gamma\ltimes\iota^A_1)(W) &= W_{12}\\ (\id\otimes \Gamma\ltimes\iota^A_2)(W)& = W_{13}\end{split}\] we get 
\[\begin{split} 
(\id\otimes\lambda\ltimes\Delta)(\Psi W \Psi^*)  &=
(\id\otimes\lambda\ltimes \iota^{A}_1)(\Psi W\Psi^*)(\id\otimes\lambda\ltimes \iota^{A}_2)(\Psi W\Psi^*)
\end{split}\]
Since $\lambda\ltimes\Delta|_{A^\Psi} = \Delta^\Psi$, $\lambda\ltimes \iota^{A}_1|_{A^\Psi} = \iota_1^{A^\Psi}$ and $\lambda\ltimes \iota^{A}_2|_{A^\Psi} = \iota_2^{A^\Psi}$ then with the assumption that $W^\Psi\in\M(\mathcal{K}(L)\otimes A^\Psi)$ we get \eqref{addassum}.
\end{proof}
Since the multiplicative unitary $\widehat W$ for $\widehat\GG$ is given by $\Sigma W^*\Sigma $ we immediately get the analogous results for 
$(\widehat{A}^\Phi,\widehat\Delta^\Phi)$. 
In particular \[
\widehat A^\Phi = \{(\id\otimes \omega)W^\Psi:\omega\in\B(\Ltwo(L))_*\}^{\textrm{cls}}.\]
Actually one expects to have  $W^\Psi\in\M(\widehat A^\Phi\otimes A^\Psi)$ which we were not able to prove even under the assumption that    $W^\Psi\in\M(\mathcal{K}(L)\otimes  A^\Psi)$ and $W^\Psi\in\M(\widehat A^\Phi\otimes\mathcal{K}(L))$ . In the quantum group theory the respective result is obtained by the pentagonal equation which is not available for $W^\Psi$. 

\section{On a certain  example of a quantum Minkowski space}
In this section we  employ  Rieffel deformation to  get an example of a $\C^*$-quantum Minkowski space. In order to do it we  elaborate the example described in \cite{kasp1} where   Rieffel deformation functor $\RD^\Psi$ was applied to the $\C^*$-algebra $\C_0(\mathcal{M})$ of the Minkowski space $\mathcal{M}$. Let us first give the concise description of the results  concerning the example of a quantum Minkowski space described in  \cite{kasp1}.  Since in the context of this example the Minkowski space was  equipped with the action of $SL(2,\mathbb{C})$ (the twofold cover of the connected component of the Lorentz group) we identify $\mathcal{M}$ with the space of selfadjoint matrices $\mathcal{H}$  
\begin{equation}\label{clgen}\mathcal{H}=\left\{\begin{bmatrix} x&w\\\bar{w}&y\end{bmatrix}:\,x,y\in\mathbb{R},\,w\in\mathbb{C}\right\}\end{equation} 
where the identifiaction is given by the map \[\mathcal{M}\ni[x_\mu] = \begin{bmatrix}x_0\\x_1\\x_2\\x_3\end{bmatrix} \mapsto\sigma([x_\mu]) = \begin{bmatrix} x_0+x_3&x_1+\imath x_2\\x_1-\imath x_2&x_0-x_3\end{bmatrix}\in\mathcal{H}.\]
With this identification the action of $SL(2,\mathbb{C})$ on $\mathcal{M}$ is given by matrix conjugation
\[\sigma(\alpha_g[x_\mu]) = g\sigma([x_\mu])g^*\] Equipping $\mathcal{M}$ with the addition
\[[x_\mu]+[y_\mu] = [x_\mu+y_\mu]\] we view   $\alpha_g$ as    automorphism  of $\mathcal{M}$.

On the  $\C^*$-level  we introduce a notation  $\mathbb{M} =(\C_0(\mathcal{M}),\Delta)$ where \[\Delta(f)([x_\mu],[y_\mu]) = f([x_\mu+y_\mu])\]  for any  $f\in\C_0(\mathcal{M})$.  
Using the restriction of $\alpha$ to the  subgroup of diagonal matrices \[  \left\{\begin{bmatrix} a& 0\\0& a^{-1}\end{bmatrix}:a\in\mathbb{C}\right\}\subset SL(2,\mathbb{C})\] we get the data needed to  perform Rieffel deformation of $\mathbb{M}$. For convenience  we pull  back  the action to $\mathbb{C}$   by the group homomorphism:
\[\mathbb{C}\ni z\mapsto\begin{bmatrix} e^z& 0\\0& e^{-z}\end{bmatrix}\in\Gamma\] and we define $\rho:\mathbb{C}\rightarrow\Aut(\mathbb{M})$
\[(\rho_z f)([x_\mu]) = f(\alpha_{\sigma(z)}[x_\mu])\]
We identify  $\mathbb{C}$ and $\widehat{\mathbb{C}}$ by the duality 
\[\langle z_1,z_2\rangle =\exp(\imath  \Im(z_1 z_2))\]
and define a bicharacter  on $\widehat{\mathbb{C}}$: \[\Psi(z_1,z_2) = \exp(\imath s\Im(z_1\overline{z_2}))\]  $s\in\mathbb{R}$ being the deformation parameter. 

Let $\mathbb{M}^\Psi = (\C_0(\mathcal{M})^\Psi, \Delta^\Psi)$ be the Rieffel deformation of $\mathbb{M}$ as described in Section \ref{bqg}.  
In what follows we shall introduce the $\mathbb{M}^\Psi$-counterparts of   $x,y, w$ coordinates. In order to do this let us consider a unitary $V\in\M(\C^*(\mathbb{C}))$  such that $V(z) = \exp\left(\imath\frac{s}{2}\Im(z^2)\right)$  - note the $\C^*(\mathbb{C}) \cong\C_0(\mathbb{C})$ identification which we use to interpret $V$ as a function on $\mathbb{C}$.  We define   
\begin{equation}\label{defcoef} \widehat x=e^{-2s}V x V^*,\,\widehat y=e^{-2s}V y  V^*,\,\widehat w=e^{2s}V^* w V
\end{equation} which are  elements affiliated with a $\C^*$-algebra $\mathbb{C}\ltimes \C_0(\mathcal{M})$. 
For the concept of an element $T$ affiliated with a $\C^*$-algebra  $A$, $T\,\eta A$ we refer to \cite{woraffun} where an idea  of $\C^*$-algebras generated by affiliated elements is also developed.   In  Theorem 5.3 of \cite{kasp1} we proved  that $\widehat x,\widehat y,\widehat w\,\eta\C_0(\mathcal{M})^\Psi$ and $\C^*$-algebra $\C_0(\mathcal{M})^\Psi$ is generated by $\widehat x,\widehat y,\widehat w$. Remarkably $\widehat x,\widehat y,\widehat w$ satisfy the following commutation relations
\begin{equation}\label{comrel1}\begin{split}
\widehat{x}\widehat{w} &= t^{-1} \widehat{w} \widehat{x}\\
\widehat{x}\widehat{w}^* &= t  \widehat{w}^* \widehat{x}\\
\widehat{y}\widehat{w} &= t \widehat{w} \widehat{y}\\
\widehat{y}\widehat{w}^* &= t \widehat{w}^* \widehat{y}
\end{split}
\end{equation}
where $t = e^{-8s}$. For the precise  meaning of \eqref{comrel1} we advice to consult Theorem 3.7 of \cite{kasp1} where the explanation  of the relation 
\begin{align*} RS&=p^2SR\\
RS^*&=q^2S^*R
\end{align*}  for normal operators $R,S\,\eta A$ is  given. 

In the formulation of the next theorem, where we analyze the action of $\Delta^\Psi$ on generators, we use the extension of notation \eqref{sugnot} to affiliated elements writing   \[T\otimes_\Psi\I, \I\otimes_\Psi T\,\eta\,\, (A^\Psi\otimes_\Psi A^\Psi) \] for $T\,\eta\, A^\Psi$. 

\begin{theorem}\label{commrel}
Let $\widehat{x},\widehat{y},\widehat{w}\,\eta\,\C_0(\mathcal{M})^\Psi$ be the affiliated elements defined above. 
Then
\begin{itemize}
\item  $\left(\widehat{x}\otimes_\Psi\I,\I\otimes_\Psi\widehat{x}\right)$, $\left(\widehat{y}\otimes_\Psi\I,\I\otimes_\Psi\widehat{y}\right)$, $\left(\widehat{w}\otimes_\Psi\I,\I\otimes_\Psi\widehat{w}\right)$ and $(\widehat{x}\otimes_\Psi\I, \I \otimes_\Psi\widehat{y})$  are strongly commuting pairs of normal elements affiliated with $\C_0(\mathcal{M})^\Psi\otimes_\Psi\C_0(\mathcal{M})^\Psi$ and we have
\begin{equation}\label{delpcor}\begin{split}
\Delta^\Psi_{\mathcal{M}}(\widehat{x}) &= \widehat{x}\otimes_\Psi\I+\I\otimes_\Psi\widehat{x}\\
\Delta^\Psi_{\mathcal{M}}(\widehat{y}) &= \widehat{y}\otimes_\Psi\I+\I\otimes_\Psi\widehat{y}\\
\Delta^\Psi_{\mathcal{M}}(\widehat{x}) &= \widehat{w}\otimes_\Psi\I+\I\otimes_\Psi\widehat{w}
\end{split}
\end{equation}

\item  $\left(\widehat{x}\otimes_\Psi\I,\I\otimes_\Psi\widehat{w}\right)$  and $\left(\widehat{y}\otimes_\Psi\I,\I\otimes_\Psi\widehat{w}\right)$ are respectively   $(t^{-1},t)$ and $(t,t^{-1})$-commuting pair of normal elements affiliated with $\C_0(\mathcal{M})^\Psi\otimes_\Psi\C_0(\mathcal{M})^\Psi$.
\end{itemize}
\end{theorem}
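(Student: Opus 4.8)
The plan is to reduce everything to a single Rieffel deformation of the product space. By \eqref{inter} applied with $A=B=\C_0(\mathcal{M})$ one has $\C_0(\mathcal{M})^\Psi\otimes_\Psi\C_0(\mathcal{M})^\Psi=\C_0(\mathcal{M}\times\mathcal{M})^\Psi$, where $\Gamma=\mathbb{C}$ acts diagonally on $\mathcal{M}\times\mathcal{M}$. Writing $x_1=x\otimes\I$, $w_2=\I\otimes w$, and so on, for the coordinate functions on the two factors, I would first make the generators explicit. Using the identities $\iota_1^{A^\Psi}=\lambda\ltimes\iota^A_1|_{A^\Psi}$ and $\iota_2^{A^\Psi}=\lambda\ltimes\iota^A_2|_{A^\Psi}$ together with $\widehat x=e^{-2s}VxV^*$, and the fact that each $\lambda\ltimes\iota^A_j$ restricts to the standard embedding of $\C^*(\Gamma)$ (so it fixes $V$), one gets
\[\widehat x\otimes_\Psi\I=e^{-2s}Vx_1V^*,\qquad \I\otimes_\Psi\widehat w=e^{2s}V^*w_2V,\]
and likewise for the remaining generators. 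Thus each of $\widehat x\otimes_\Psi\I,\ \I\otimes_\Psi\widehat x,\ \widehat w\otimes_\Psi\I,\ \I\otimes_\Psi\widehat w$, etc., is precisely the deformed coordinate obtained by applying the recipe \eqref{defcoef} to $\mathcal{M}\times\mathcal{M}$.

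For the commutation relations I would invoke the single-variable analysis of \cite[Theorems 3.7 and 5.3]{kasp1} directly on $\mathcal{M}\times\mathcal{M}$. Since the action is diagonal, each coordinate on either factor carries the same homogeneity weight as the corresponding coordinate on a single copy,
\[\rho_z(x_i)=e^{2\Re z}x_i,\quad \rho_z(y_i)=e^{-2\Re z}y_i,\quad \rho_z(w_i)=e^{2\imath\Im z}w_i\qquad(i=1,2),\]
and the deformed commutation relation between two such normal affiliated elements depends only on their weights and on $\Psi$, not on the factor indices. Hence for a pair with one leg in each factor it coincides with the single-copy relation \eqref{comrel1}. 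Reading these off yields strong commutation for $(\widehat x\otimes_\Psi\I,\I\otimes_\Psi\widehat x)$, $(\widehat y\otimes_\Psi\I,\I\otimes_\Psi\widehat y)$, $(\widehat w\otimes_\Psi\I,\I\otimes_\Psi\widehat w)$ and $(\widehat x\otimes_\Psi\I,\I\otimes_\Psi\widehat y)$ (whose relative weights make the twisting scalar equal to $1$), the $(t^{-1},t)$ relation for $(\widehat x\otimes_\Psi\I,\I\otimes_\Psi\widehat w)$, and the $(t,t^{-1})$ relation for $(\widehat y\otimes_\Psi\I,\I\otimes_\Psi\widehat w)$. This last value forces the fourth line of \eqref{comrel1} to read $\widehat y\widehat w^*=t^{-1}\widehat w^*\widehat y$, as adjoint-consistency of $\widehat y\widehat w=t\widehat w\widehat y$ with $\widehat y=\widehat y^*$ and real $t$ demands.

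The coproduct identities \eqref{delpcor} then follow from the first two steps. Since $\Delta^\Psi=\lambda\ltimes\Delta|_{\C_0(\mathcal{M})^\Psi}$, and $\lambda\ltimes\Delta$ acts as the canonical embedding on $V\in\C^*(\Gamma)$ and as $\Delta$ on $\C_0(\mathcal{M})$, while the coordinate functions are additive for the group law ($\Delta(x)=x\otimes\I+\I\otimes x$, and similarly for $y,w$, because $\sigma$ is linear), one computes
\[\Delta^\Psi(\widehat x)=e^{-2s}V\,\Delta(x)\,V^*=e^{-2s}V(x_1+x_2)V^*=\widehat x\otimes_\Psi\I+\I\otimes_\Psi\widehat x,\]
and analogously for $\widehat y$ and $\widehat w$; the right-hand sum is a bona fide normal affiliated element precisely because of the strong commutativity established above.

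The main obstacle is the second step: turning the slogan ``relations depend only on weights'' into rigorous statements about strongly commuting and $(p^2,q^2)$-commuting pairs of unbounded normal operators in the sense of \cite[Theorem 3.7]{kasp1}. This requires the careful functional-calculus and Weyl-relation bookkeeping for the real-scaling coordinates $x,y$, whose weights are imaginary and call for the analytic continuation already used in \cite{kasp1}, together with a verification that the diagonal crossed product $\Gamma\ltimes(\C_0(\mathcal{M})\otimes\C_0(\mathcal{M}))$, represented as in Remark \ref{repcp}, produces the same twisting scalars $t^{\pm1}$ on the mixed pairs as on a single copy.
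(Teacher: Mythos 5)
Your proposal is correct and follows essentially the same route as the paper: identify $\C_0(\mathcal{M})^\Psi\otimes_\Psi\C_0(\mathcal{M})^\Psi$ with the deformation of the product, write the deformed legs as $e^{\pm 2s}U(\,\cdot\,)U^*$ with $U$ the image of $V$ in the crossed product of the tensor product, reduce the mixed commutation relations to the single-copy analysis of Theorem 5.3 of \cite{kasp1} via equality of the homogeneity weights, and obtain \eqref{delpcor} from $\Delta^\Psi=\lambda\ltimes\Delta|_{\C_0(\mathcal{M})^\Psi}$ together with additivity of the coordinate functions. Your side remark that adjoint-consistency forces the last line of \eqref{comrel1} to read $\widehat y\widehat w^*=t^{-1}\widehat w^*\widehat y$ is a correct catch of a typo and agrees with the $(t,t^{-1})$-commutation asserted in the theorem.
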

\begin{proof}Note that
 \[\begin{split}
 \widehat{x}\otimes_\Psi\I&= e^{-2s}U(x\otimes \I)U^*\\
\I\otimes_\Psi\widehat{x}&= e^{-2s}U(\I\otimes x)U^*\\
  \widehat{y}\otimes_\Psi\I&=e^{-2s} U(y\otimes \I)U^*\\
\I\otimes_\Psi\widehat{y}&= e^{-2s}U(\I\otimes y)U^*\\
  \widehat{w}\otimes_\Psi\I&= e^{ 2s}U(w\otimes \I)U^*\\
\I\otimes_\Psi\widehat{w}&= e^{ 2s}U(\I\otimes w)U^*\\
\end{split}\]
where   $U$ is the image of $V = \exp\left(\imath\frac{s}{2}\Im(z^2)\right)\in\M(\C^*(\mathbb{C}))$ under the embedding \[\iota^{\C^*(\Gamma)}\in\Mor(\C^*(\mathbb{C}), \mathbb{C}\ltimes(\C_0(\mathcal{M})\otimes \C_0(\mathcal{M}))).\]
 Noting that 
 \begin{itemize}
 \item the commutation relation between    $ U $ and $x\otimes 1, y\otimes \I $ and $w\otimes \I$  are the same as in the case of $V$ and $x,y,w$
 \item  the commutation relation between    $ U $ and $  \I\otimes x,  \I\otimes y $ and $ \I\otimes w$  are the same as in the case of $V$ and $x,y,w$
 \end{itemize}
 the techniques of the proof of Theorem  5.3 of \cite{kasp1} may   be applied giving the commutation relation of our theorem.
 
Let us compute   $\Delta^\Psi(\widehat{x})$: 
 \[\begin{split}\Delta^\Psi(\widehat{x}) &= (\lambda\ltimes \Delta)(e^{-2s}VxV^*) \\&= e^{-2s}U(x\otimes 1 + 1\otimes x)U^*\\&=e^{-2s}U(x\otimes 1)U^* + e^{-2s}U(1\otimes x)U^*\\& = \widehat{x}\otimes_\Psi\I + \I\otimes_\Psi\widehat{x}
 \end{split}
 \] Similarly we get the remaining identities of \eqref{delpcor}.  
\end{proof}

\section*{Acknowledgements}
The author  wish to thank S.L. Woronowicz, S. Roy and P. So\l tan for helpful  discussions. 

\end{document}